\newcommand{\core}[1]{{#1}^{\tiny{\textcircled{\#}}}} 
\newcommand{\dcore}[1]{{#1}_{\tiny{\textcircled{\#}}}} 
\newcommand{\ce}{\mathbbmss{C}} 
\newcommand{\ene}{\mathbbmss{N}} 
\newcommand{\ere}{\mathbbmss{R}} 
\newcommand{\uno}{\mathbbmss{1}} 
\newcommand{\nn}{\mathcal{N}} 
\newcommand{\rr}{\mathcal{R}} 
\renewcommand{\aa}{\mathcal{A}} 
\newcommand{\hh}{\mathcal{H}} 
\newcommand{\xx}{\mathcal{X}} 
\newcommand{\mm}{\mathcal{M}} 
\renewcommand{\ll}{\mathcal{L}} 
\newcommand{\ttt}{\mathcal{T}}
\newcommand{\sss}{\mathcal{S}}
\newcommand{\gap}[2]{\widehat{\delta} \left(#1,#2 \right)}
\DeclareMathOperator{\rk}{rk} 
\DeclareMathOperator{\ds}{dist} 
\newcommand{\suc}[1]{(#1)_{n \in \ene}}
\newtheorem{df}{Definition}[section]
\newtheorem{theorem}[df]{Theorem}
\newtheorem{lemma}[df]{Lemma}
\newtheorem{remark}[df]{Remark}
\newtheorem{example}[df]{Example}
\newtheorem{proposition}[df]{Proposition}
\newtheorem{corollary}[df]{Corollary}
\title{On the continuity and differentiability\\ of the (dual) core inverse in $C^*$-algebras}
\date{}
\author{Julio Ben\'{\i}tez, Enrico Boasso, Sanzhang Xu}
\begin{document}

\maketitle

\begin{abstract}\noindent The continuity of the core inverse and the dual core inverse is studied in the setting of $C^*$-algebras. Later, 
this study is specialized to the case of bounded Hilbert space operators and 
to complex matrices. In addition, the differentiability of these generalized inverses is studied in the context of $C^*$-algebras.
 \par
\vskip.2truecm
\noindent  \it Keywords: \rm Core inverse, Dual core inverse, $C^*$-algebra, Hilbert space, Matrices.\par
\vskip.2truecm
\noindent \it AMS Classification: \rm Primary 46L05, 47A05, Secondary 46K05, 15A09.
\end{abstract}

\section{Introduction}

The core inverse and the dual core inverse of a matrix were introduced in \cite{BT}. These generalized inverses have been studied by several authors,
in particular they have been extended to rings with involution (\hspace{-1pt}\cite{rdd}) and to Hilbert space operators (\hspace{-1pt}\cite{rdd2}). It is worth noticing that the inverses under consideration 
are closely related to the group inverse and the Moore-Penrose inverse; 
to learn more results concerning these notions, see for example \cite{BT, rdd, rdd2, 14}.

So far the properties of the (dual) core inverse that have been researched are mainly of algebraic nature and the setting has been essentially the one of rings with involution.
The objective of the present article is to study the continuity and the differentiability of these inverses in the context of $C^*$-algebras.

In fact, in section 3, after having recalled several preliminary results in section 2, the continuity of the core inverse and of  the dual core inverse will be studied. Two main characterizations will be 
presented. The first one relates the continuity of the aforementioned notions to the continuity  of the group inverse and of the Moore-Penrose inverse. The second characterization 
uses the notion of the gap between subspaces; a similar approach has been used to study the continuity of the Drazin inverse and of the Moore-Penrose inverse, see for example \cite{V, V2, kr1}
and \cite[Chapter 4]{dr}. In section 4 results regarding the continuity of the (dual) core inverse of Hilbert space operators and matrices will be presented. Finally, in section 5 the differentiability of the generalized inverses under consideration will be researched. Furthermore, some results concerning the continuity and the differentiability of the group inverse and the Moore-Penrose inverse
will be also proved.

It is  noteworthy to mention that the core inverse and the dual core inverse are two particular cases of the $(b, c)$-inverse (\hspace{-1pt}\cite{D}), see \cite[theorem 4.4]{rdd}.  
Therefore the representations and other results presented in \cite[Section 7]{b2} can be applied to these generalized inverses.

\section{Preliminary Definitions}

\noindent Since properties of $C^*$-algebra elements will be studied in what follows, although the main notions considered in this article 
can be given in the context of rings with involution,
all the definition will be presented in the frame of $C^*$-algebras. 

 From now on $\aa$ will denote a unital $C^*$-algebra with unity $\uno$. In addition, $\aa^{-1}$ will stand for the set of all invertible elements in $\aa$.
Given $a\in\aa$, the {\em image ideals} and the {\em null ideals} defined by $a\in\aa$ 
are the following sets:
\begin{align*}
& &&a \aa = \{ ax: x \in \aa \},& &\aa a = \{ xa: x \in \aa \},&\\ 
& &&a^\circ = \{ x \in \aa: ax=0 \},& &{}^\circ a = \{ x \in \aa: xa=0 \}.&&\\
\end{align*}
\noindent Recall that $a\in\aa$  is said to be \it regular, \rm if there exists $b\in \aa$ such that $a=aba$. In addition,
$b\in\aa$ is said to be \it an outer inverse of $a\in\aa$, \rm if $b=bab$.

The notion of invertible element has been generalized or extended in several ways.   
One of the most important notion of generalized inverse is the Moore-Penrose inverse. An element $a\in\aa$ is said to be \it Moore-Penrose
invertible, \rm if there is $x\in\aa$ such that the following equations hold:

\begin{align*} 
& &&axa=a,& &xax=x,& &(ax)^*=ax,& &(xa)^* = xa.&\\
\end{align*}

\noindent It is well known that if such an $x$ exists, then it is unique, and in this case $x$, the Moore-Penrose inverse of $a$,
will be denoted by $a^\dag$. Moreover, the subset of $\aa$ composed of all the
Moore-Penrose invertible elements of $\aa$ will be denoted by $\aa^\dagger$. It is worth noticing that according to \cite[Theorem 6]{hm1}, a necessary and sufficient condition for
$a\in \aa^\dagger$ is that $a\in\aa$ is regular, which in turn is equivalent to $a\aa$ is closed (\hspace{-1pt}\cite[Theorem 8]{hm1}). Moreover, if $a\in\aa^\dagger$, then it is not difficult to prove that
$a^\dag \aa=a^* \aa$ and $ \aa a^\dag=\aa a^*$. To learn more properties of the Moore-Penrose inverse in the frame of $C^*$-algebras, see \cite{hm1, hm, P, mb, rs, dr}.\par

Another generalized inverse which will be central for the purpose of this article is the group inverse.
An element $a \in \aa$
is said to be {\em group invertible}, if there is $x \in \aa$ such that
\begin{align*}
& & &axa=a,&  &xax=x,& &ax=xa.&
\end{align*}
It can be easily proved that if such $x$ exists, then it is unique. The group inverse is
customarily denoted by $a^\#$. The subset of $\aa$ composed by all the group invertible elements in $\aa$
will be denoted by $\aa^\#$.

Next follows one of the main notions of this article (see \cite[Definition~2.3]{rdd}, see also \cite{BT} for the original definition in the context of matrices). 

\begin{df} \label{df1}Given a unital $C^*$-algebra $\aa$, an element $a \in \aa$ will be said to be core invertible,
if there exists  $x \in \aa$ such that the following equalities hold:
$$
axa=a, \qquad x \aa = a \aa, \qquad \aa x = \aa a^*.
$$
\end{df}

According to \cite[Theorem 2.14]{rdd}, if such an element $x$ 
exists, then it is unique. This element will be said
to be the {\em core inverse} of $a \in \aa$ and it will be denoted by $\core{a}$. In addition,
the set of all core invertible elements of $\aa$ will be denoted
by $\core{\aa}$.

Recall that according to \cite[Theorem 2.14]{rdd}, when $\core{a}$ exists ($a\in\aa$), it is an outer inverse of $a$, i.e.,
$\core{a} a \core{a} = \core{a}$. Moreover, in
\cite[Theorem 2.14]{rdd}, the authors characterized 
the core invertibility in terms of equalities. This characterization 
was improved in  \cite[Theorem~3.1]{14}. Specifically, $a \in \aa$ is core invertible if and only if there exists $x \in \aa$ such that
\begin{align*} \label{prop_core}
& &&a x^2 = x,& &x a^2 = a,& &(a x)^* = a x.&\\
\end{align*}
Furthermore, if such $x$ exists, then $x = \core{a}$.

Another generalized inverse, which is related with the core inverse, was defined in \cite{rdd}. 

\begin{df}\label{df2} Given $\aa$ a unital $C^*$-algebra, an element $a \in \aa$ is said to be  dual 
core invertible, if there is  $x \in \aa$ such that $axa=a$, $x\aa=a^* \aa$, and $\aa x = \aa a$.
\end{df}

As for the core inverse, it can be proved that this $x$ is unique, when it exists; thus 
it will be denoted by $\dcore{a}$ and $\dcore{\aa}$ will stand for the set of all dual core invertible elements of $\aa$. 
Note that $a \in \aa$ is core invertible 
if and only if $a^*$ is dual core invertible and in this case, 
$\dcore{(a^*)}=(\core{a})^*$. In addition, according to \cite[Theorem 2.15]{rdd}, when $a\in\dcore{\aa}$, $\dcore{a}$ is an outer inverse of $a$,
i.e., $\dcore{a}=\dcore{a} a\dcore{a}$. 

Observe that according to Definition \ref{df1} (respectively Definition \ref{df2}), if $a\in\aa$ is core invertible
(respectively dual core invertible), then it is regular, and hence $a$ is Moore-Penrose invertible (\hspace{-1pt}\cite[Theorem 6]{hm1}). Moreover, 
if $a\in \core{\aa}\cup\dcore{\aa}$, then $a$ is group invertible (\hspace{-1pt}\cite[Remark 2.16]{rdd}). Furthermore, 
according to \cite[Theorem 2.19]{rdd}, if $a\in\core{\aa}$, then the following equalities hold:
\begin{align*}
&a^\#=(\core{a})^2a,& &a^\#=\core{a}a\dcore{a},& &a^\dag=\dcore{a}a\core{a},&\core{a}=a^\# a a^\dag,& &\dcore{a}=a^\dag a a^\#.&\\ 
\end{align*}

\noindent To learn more on the properties of the core and dual core inverse,
see \cite{BT, rdd, 14}.

On the other hand, $\xx$ will stand for  a Banach space and $\ll(\xx)$ for the algebra of all operators defined on and with values in $\xx$. 
When $A \in \ll(\xx)$, the range 
and the null space of $A$ will be denoted by $\rr(A)$ and $\nn(A)$, respectively.
When $\dim \xx < \infty$ and $A \in \ll(\xx)$, the dimension of $\rr(A)$ will be denoted
with $\rk(A)$. Evidently, if $A\in\ce_n$, the set of complex $n \times n$ matrices, by considering
that $A \in \ll(\ce^n)$, the rank of the complex matrix $A$ coincides with the
previously defined $\rk(A)$; consequently, the same notation will be used for both notions.

One of most studied generalized inverses is the outer inverse with prescribed range
and null space. This generalized inverse will be introduced in the Banach frame. 
Let $\xx$ be a Banach space and consider $A \in \ll(\xx)$ and $\ttt$, $\sss$  two closed subspaces
in $\xx$. If there exists an operator $B \in \ll(\xx)$ such that $BAB=B$, 
$\nn(B)=\sss$, and $\rr(B)=\ttt$, then 
such $B$ is unique (\hspace{-1pt}\cite[Theorem 1.1.10]{dr}). 
In this case, $B$ will be said
to be the {\em $A_{\ttt,\sss}^{(2)}$ outer inverse of} $A$.

To prove several results of this article, the definition of the gap between two subspaces
need to be recalled. Let $\xx$ be a Banach space and consider $\mm$ and $\nn$
two closed subspaces in $\xx$. If $\mm=0$, then set $\delta(\mm,\nn)=0$, otherwise set
\begin{align*}
& & &\delta(\mm,\nn) = \sup \{ \ds (x, \nn) : x \in \mm, \| x \| = 1 \},&\\
\end{align*}
where $\ds(x,\nn)=\inf \{ \| x - y \| : y \in \nn \}$. The {\em gap between the closed
subspaces $\mm$ and $\nn$} is
\begin{align*}
& &&\gap{\mm}{\nn} = \max \{ \delta(\mm,\nn), \delta(\nn,\mm) \}.&\\
\end{align*}
See \cite{dr,dx,kato} for a deeper insight of this concept.

Another notion needed to study the continuity of the (dual) core inverse is the following.
Let $p$ and $q$ be self-adjoint idempotents in a 
$C^*$-algebra $\aa$. The {\em maximal angle} between $p$ and $q$ is the number 
$\psi(p, q) \in [0, \pi/2]$ such that $\| p - q \| = \sin \psi(p, q)$; see \cite[Definition 2.3]{cbl2}.
In what follows, given $x\in\aa^\dag$, $\psi_x$ will stand for the maximal angle between $xx^\dag$ and
$x^\dag x$, i.e., $\psi_x=\psi (xx^\dag, x^\dag x)$.

\section{Continuity of the (dual) core inverse }

In first place a preliminary result need to be presented.

\begin{theorem}\label{theo310} Let $\aa$ be a unital $C^*$-algebra and consider $a\in\aa$. The following statements are equivalent
\begin{enumerate}[{\rm (i)}]
\item $a$ is core invertible.
\item $a$ is dual core invertible.
\item $a^*$ is core invertible.
\item $a^*$ is dual core invertible.
\item $a$ is group invertible and Moore-Penrose invertible.
\end{enumerate}
\noindent In particular, $\core{\aa}=\dcore{\aa}=\aa^\#\cap\aa^\dag=\aa^\#$.
\end{theorem}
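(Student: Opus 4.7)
The plan is to reduce everything to a single non-trivial implication, namely $(\text{v}) \Rightarrow (\text{i})$, and to collect the remaining equivalences from facts already recorded in the excerpt.

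First, I would record the ``free'' equivalences. By the observation following Definition \ref{df2}, $a \in \core{\aa}$ if and only if $a^* \in \dcore{\aa}$, which immediately gives $(\text{i}) \Leftrightarrow (\text{iv})$ and, applied to $a^*$ in place of $a$, also $(\text{ii}) \Leftrightarrow (\text{iii})$. Next, for the implications $(\text{i}) \Rightarrow (\text{v})$ and $(\text{ii}) \Rightarrow (\text{v})$: Definitions \ref{df1} and \ref{df2} both force $axa=a$, so $a$ is regular, hence $a \in \aa^\dag$ by \cite[Theorem 6]{hm1}; group invertibility is then given by \cite[Remark 2.16]{rdd}.

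The only substantive step is $(\text{v}) \Rightarrow (\text{i})$. Assume $a \in \aa^\# \cap \aa^\dag$ and set
\[
x = a^\# a a^\dag.
\]
I would verify the three-equation characterization from the excerpt, namely $ax^2 = x$, $xa^2 = a$, and $(ax)^* = ax$, which by \cite[Theorem 3.1]{14} forces $x = \core{a}$. For $(ax)^* = ax$: compute $ax = aa^\# a a^\dag = a a^\dag$, which is self-adjoint. For $x a^2 = a$: using $aa^\dag a = a$ and $a^\# a^2 = a$, one obtains $xa^2 = a^\# (aa^\dag a) a = a^\# a^2 = a$. Finally for $ax^2 = x$: since $a = a^\# a^2$ implies $a^\# = a \cdot (a^\#)^2 \in a\aa$, and $aa^\dag$ acts as a left identity on $a\aa$ (because $aa^\dag a = a$), we get $aa^\dag a^\# = a^\#$, whence $ax^2 = (ax)x = aa^\dag \cdot a^\# a a^\dag = a^\# a a^\dag = x$. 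Thus $a$ is core invertible. Via $(\text{i}) \Leftrightarrow (\text{iv})$ applied to $a^*$, this also yields $(\text{v}) \Rightarrow (\text{ii})$, closing the loop.

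For the ``in particular'' statement, the equivalence $(\text{i}) \Leftrightarrow (\text{ii}) \Leftrightarrow (\text{v})$ gives $\core{\aa} = \dcore{\aa} = \aa^\# \cap \aa^\dag$; and the last equality $\aa^\# \cap \aa^\dag = \aa^\#$ follows because any $a \in \aa^\#$ satisfies $aa^\# a = a$, so $a$ is regular and therefore Moore--Penrose invertible by \cite[Theorem 6]{hm1}, giving $\aa^\# \subseteq \aa^\dag$. The main (and really the only) obstacle is producing a formula for $\core{a}$ from $a^\#$ and $a^\dag$; the candidate $a^\# a a^\dag$ is exactly the expression the excerpt already records in the list of identities following Remark~2.16, so the verification is formal once one keeps track of the idempotent relations $aa^\dag a=a$, $a^\# a^2=a$ and the commutation $aa^\#=a^\# a$.
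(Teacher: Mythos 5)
Your proposal is correct, and it takes a genuinely more self-contained route than the paper at the one substantive point. The paper, after dispatching (i)$\Leftrightarrow$(iv) and (ii)$\Leftrightarrow$(iii) exactly as you do, reduces everything to the set equality $\core{\aa}=\dcore{\aa}=\aa^\#\cap\aa^\dag=\aa^\#$ and obtains the crucial identity $\aa^\#\cap\aa^\dag=\core{\aa}\cap\dcore{\aa}$ by citing \cite[Remark 2.16]{rdd} wholesale; the hard implication (v)$\Rightarrow$(i) is thus entirely delegated to the reference. You instead prove it directly: the candidate $x=a^\#aa^\dag$ together with the three-equation characterization $ax^2=x$, $xa^2=a$, $(ax)^*=ax$ of \cite[Theorem~3.1]{14}, and your verifications are all correct (the only slightly nonobvious one, $aa^\dag a^\#=a^\#$, follows as you say from $a^\#=a(a^\#)^2\in a\aa$). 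What your approach buys is a proof readable from the stated preliminaries alone, plus the formula $\core{a}=a^\# a a^\dag$ as a byproduct rather than an imported fact; what the paper's approach buys is brevity, since the ring-theoretic reference already contains the equality of sets. The remaining pieces --- (i)$\Rightarrow$(v) via regularity and \cite[Theorem 6]{hm1}, the transfer of (v)$\Rightarrow$(i) to (v)$\Rightarrow$(ii) through $a^*$, and $\aa^\#\subseteq\aa^\dag$ for the final equality --- match the paper's reasoning.
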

\begin{proof} The equivalence between statements (i) and (iv) and between statements (ii) and (iii) can be derived from Definition \ref{df1}
and Definition \ref{df2}. Note that to conclude the proof, it is enough to prove the last statement of the Theorem. In fact, this statement implies that statement (i) and (ii) are equivalent.

According to \cite[Remark 2.16]{rdd}, $\core{\aa}\cup\dcore{\aa}\subseteq \aa^\#$. Moreover, according to \cite[Theorem 6]{hm1},
 $\core{\aa}\cup\dcore{\aa}\subseteq \aa^\dag$. Therefore, according to \cite[Remark 2.16]{rdd}, 
\begin{align*}
&\core{\aa}\subseteq \aa^\#\cap \aa^\dag=\core{\aa}\cap \dcore{\aa}\subseteq \core{\aa},&
&\dcore{\aa}\subseteq \aa^\#\cap \aa^\dag=\core{\aa}\cap \dcore{\aa}\subseteq \dcore{\aa}&\\
\end{align*}
\noindent Finally, according to \cite[Theorem 6]{hm1}, $ \aa^\#\cap \aa^\dag= \aa^\#$.
\end{proof}

Note that under the same conditions in Theorem \ref{theo310}, as for the group inverse and the Moore-Penrose inverse,
$(\core{\aa})^*=\core{\aa}$ and $(\dcore{\aa})^*=\dcore{\aa}$, where if $X\subseteq \aa$ is a set, $X^*$ stands for the following set:
$X^*=\{x^*\colon x\in X\}$. However, in contrast to the case of the group inverse and the Moore-Penrose inverse (when $a^\#$ (respectively $a^\dag$) exits,
$(a^*)^\#=(a^\#)^*$ (respectively $(a^*)^\dag=(a^\dag)^*$), $a\in\aa$),
 recall that to obtain the core inverse (respectively the dual core inverse) of $a^*$ it is necessary to 
consider the dual core (respectively the core) inverse of $a$: 
\begin{align*}
& &&\core{(a^*)}=(\dcore{a})^*,& &\dcore{(a^*)}=(\core{a})^*.&\\
\end{align*}

To prove the first characterization of this section some preparation is needed.

\begin{lemma}\label{lema2}
Let $\aa$ be a unital $C^*$-algebra and consider $a\in\aa$.
\begin{enumerate}[{\rm (i)}]
\item If $a \in \core{\aa}$, then $aa^\dag \core{a} = \core{a}$.
\item If $a \in \core{\aa}$, then $(aa^\dag + a^\dag a - \uno) \core{a} = a^\dag$.
\item Suppose that $a\in\aa$ is regular. The element $aa^\dag + a^\dag a -\uno$ is invertible if and only if $a$ is core invertible. Moreover,
	in this case, $(aa^\dag + a^\dag a -\uno)^{-1} = \core{a}a + (\core{a}a)^* - \uno$.
\item If $a\in\dcore{\aa}$, then $a^\#=a(\dcore{a})^2$.
\end{enumerate}
\end{lemma}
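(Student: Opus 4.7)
The plan is to treat the four parts in order: (i) and (ii) are direct algebraic manipulations, (iii) is the main structural statement (forward direction by explicit construction of a two-sided inverse, reverse direction by reduction to group invertibility), and (iv) is a short symmetric computation.

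For (i), the defining relation $\core{a}\aa=a\aa$ places $\core{a}$ in $a\aa$, so $\core{a}=az$ for some $z\in\aa$, and then $aa^\dagger\core{a}=(aa^\dagger a)z=az=\core{a}$. For (ii), I would first record the identity $a\core{a}=aa^\dagger$, which follows from the representation $\core{a}=a^{\#}aa^\dagger$ of \cite[Theorem 2.19]{rdd} together with $aa^{\#}a=a$. Then $a^\dagger a\core{a}=a^\dagger(aa^\dagger)=a^\dagger$, and combining with (i) yields $(aa^\dagger+a^\dagger a-\uno)\core{a}=\core{a}+a^\dagger-\core{a}=a^\dagger$.

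For the forward direction of (iii), set $p=aa^\dagger$, $q=a^\dagger a$ and $r=\core{a}a$. The formula $\core{a}=a^{\#}aa^\dagger$ identifies $r=a^{\#}a=aa^{\#}$ as the group idempotent of $a$, whose one-sided ideals match those of $p$ and $q$: one has $r\aa=a\aa=p\aa$ and $\aa r=\aa a=\aa q$. Since two idempotents generating the same right (respectively left) ideal cross-absorb, this yields the four identities $pr=r$, $rp=p$, $qr=q$, $rq=r$; taking adjoints and using $p^{*}=p$, $q^{*}=q$ produces the four companions $pr^{*}=p$, $r^{*}p=r^{*}$, $r^{*}q=q$, $qr^{*}=r^{*}$. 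With these eight relations in hand, expanding both $(p+q-\uno)(r+r^{*}-\uno)$ and $(r+r^{*}-\uno)(p+q-\uno)$ absorbs every mixed term into $\{p,q,r,r^{*}\}$ and both products collapse to $\uno$, proving that $u$ is invertible with the claimed inverse. For the converse, direct expansion (using $aa^\dagger a=a$) gives $au=a^{2}a^\dagger$ and $ua=a^\dagger a^{2}$; invertibility of $u$ then places $a$ simultaneously in $a^{2}\aa$ and $\aa a^{2}$, forcing $a\aa=a^{2}\aa$ and $\aa a=\aa a^{2}$. This is the classical criterion for group invertibility of a regular element, so $a\in\aa^{\#}\cap\aa^\dagger$ and Theorem \ref{theo310} then delivers $a\in\core{\aa}$.

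For (iv), substituting $\dcore{a}=a^\dagger aa^{\#}$ (again \cite[Theorem 2.19]{rdd}) into $a(\dcore{a})^{2}$ and telescoping via $aa^\dagger a=a$, $a^{\#}a=aa^{\#}$ and $a^{\#}aa^{\#}=a^{\#}$ reduces the product to $a^{\#}$. The main obstacle is the forward direction of (iii): everything rests on the conceptual step of recognizing $\core{a}a$ as the group idempotent $a^{\#}a$, which uncovers the ideal-matching identities that turn the verification $uv=vu=\uno$ into bookkeeping; absent that observation the product $(p+q-\uno)(r+r^{*}-\uno)$ looks intractable. The remaining items are short manipulations built on the representation formulas of \cite[Theorem 2.19]{rdd} and the equivalences of Theorem \ref{theo310}.
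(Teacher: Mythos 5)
Your proof is correct, but on the substantive parts it takes a genuinely different route from the paper. For (iii), the paper does not prove the equivalence from scratch: it cites \cite[Theorem 2.3]{bc} for the fact that $aa^\dag+a^\dag a-\uno\in\aa^{-1}$ iff $a\in\aa^\#$ (then invokes Theorem \ref{theo310}), and for the inverse formula it only verifies the one-sided product $[\core{a}a+(\core{a}a)^*-\uno][aa^\dag+a^\dag a-\uno]=\uno$, which suffices because invertibility is already known. You instead give a self-contained argument: the forward direction via the absorption identities between the idempotents $p=aa^\dag$, $q=a^\dag a$, $r=\core{a}a=a^\#a$ (which do hold, and make both products collapse to $\uno$), and the converse via $au=a^2a^\dag$, $ua=a^\dag a^2$ together with the classical criterion $a\in a^2\aa\cap\aa a^2$ for group invertibility. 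Your version buys independence from the external reference and yields a genuinely two-sided verification; the paper's is shorter at the cost of outsourcing the hard equivalence. For (iv), the paper argues by duality ($\dcore{a}=(\core{(a^*)})^*$ and $(a^*)^\#=(\core{(a^*)})^2a^*$, then take adjoints), whereas you compute $a(a^\dag aa^\#)^2=a^\# (aa^\dag a)a^\#=a^\#$ directly; both are fine and of comparable length. Parts (i) and (ii) match the paper's computations essentially verbatim (your intermediate identity $a\core{a}=aa^\dag$ is a harmless repackaging of the paper's $a^\dag a\core{a}=a^\dag aa^\# aa^\dag=a^\dag$).
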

\begin{proof} Recall that according to \cite[Theorem 6]{hm1}, if $a\in \core{\aa}$, then  $a^\dag$ exists.\par

The proof of statement (i) can be derived from the fact that $\core{a} \in a \aa$.

To prove statement (ii), recall that according to \cite[Theorem 2.19 (v)]{rdd}, $\core{a} = a^\# a a^\dag$. Therefore, 
$$(aa^\dag + a^\dag a - \uno) \core{a} = 
aa^\dag \core{a} + a^\dag a \core{a}-\core{a}= a^\dag a \core{a} = 
a^\dag a a^\# a a^\dag = a^\dag.$$

Now statement (iii) will be proved. Note that according to \cite[Theorem 6]{hm1}, $a^\dag$ exists.
Recall that according to \cite[Theorem 2.3]{bc}, $aa^\dag + a^\dag a -\uno\in\aa^{-1}$
is equivalent to $a\in\aa^\#$. Thus, according to Theorem \ref{theo310}, necessary and sufficient for
$aa^\dag + a^\dag a -\uno\in\aa^{-1}$ is that $a\in\core{\aa}$. Next the formula of the inverse of $aa^\dag + a^\dag a -\uno$
will be proved. Recall that according to \cite[Theorem~3.1]{14},  $ \core{a} a aa^\dag= aa^\dag$. 

\begin{equation*}
\begin{split}
[ \core{a} a & + (\core{a}a)^* - \uno ] [ aa^\dag + a^\dag a - \uno ] \\ 
& = 
\left[ \core{a} a + (\core{a}a)^* - \uno \right] aa^\dag + 
\left[ \core{a} a + (\core{a}a)^* - \uno \right] a^\dag a -
\left[ \core{a} a + (\core{a}a)^* - \uno \right] &  \\
& = aa^\dag + (\core{a}a)^*(aa^\dag)^*-aa^\dag + \core{a}a + (\core{a}a)^*(a^\dag a)^*-a^\dag a
- \core{a}a - (\core{a} a)^* + \uno \\
& = (aa^\dag \core{a}a)^* + (a^\dag a \core{a}a)^*-a^\dag a-(\core{a}a)^* +\uno 
&  \\
& = (\core{a}a)^* + (a^\dag a)^*-a^\dag a-(\core{a}a)^* +\uno \\
& = \uno.
\end{split}
\end{equation*}
Since $aa^\dag + a^\dag a - \uno$ is invertible,
$(aa^\dag + a^\dag a - \uno)^{-1} = \core{a} a + (\core{a}a)^* - \uno$.

To prove statement (iv), recall that according Theorem \ref{theo310}, $a^*\in\core{\aa}$. In addition, according to the paragraph between   Theorem \ref{theo310}
and the present Lemma, $\dcore{a}=(\core{(a^*)})^*$. However, according to \cite[Theorem 2.19]{rdd}, $(a^*)^\#=(\core{(a^*)})^2a^*$. Thus,
$$
a^\#=a((\core{(a^*)})^*)^2=a(\dcore{a})^2.
$$ 
\end{proof}

Note that given a ring with involution $\rr$, Lemma \ref{lema2} holds in such a context provided that $a\in\rr$ is Moore-Penrose invertible,

In the next theorem the continuity of the (dual) core inverse will be characterized. It is worth noticing that $a\in\aa$ will be not assumed to be  core invertible, dual core invertible, 
group invertible or Moore-Penrose invertible. 
Note also that the following well known result will be used
in the proof of the theorem: given $\aa$ a unital Banach algebra, $b\in\aa$ and $\suc{b_n}\subset\aa^{-1}$ a sequence such that  $\suc{b_n}$ converges to $b$, if
$\suc{b_n^{-1}}$ is a bounded sequence, then $b$ is invertible and the sequence $\suc{b_n^{-1}}$ converges to $b^{-1}$.

\begin{theorem}\label{thm320} Let $\aa$ be a unital $C^*$-algebra and consider $a \in \aa$.
Let  $\suc{a_n}\subset \core{\aa}=\dcore{\aa}$ be such that $\suc{a_n}$ converges to $a$. The following
statements are equivalent.
\begin{enumerate}[{\rm (i)}]
\item  The element $a\in\core{\aa}$ and $\suc{\core{a}_n}$ converges to $\core{a}$.
\item  The element $a\in\dcore{\aa}$ and $\suc{\dcore{a_n}}$ converges to $\dcore{a}$.
\item  The element $a\in\aa^\#$ and $\suc{a_n^\#}$ converges to $a^\#$.
\item  The element $a\in\core{\aa}$ and $\suc{\core{a_n}}$ is a bounded sequence.
\item  The element $a\in\dcore{\aa}$ and $\suc{\dcore{a_n}}$ is a bounded sequence.
\item The element $a\in\aa^\dag$,  $\suc{a_n^\dag}$ converges to $a^\dag$, and  $\suc{\core{a_n}a_n}$ is a bounded sequence.
\item The element $a\in\aa^\dag$, $\suc{a_n^\dag}$ converges to $a^\dag$, and $\suc{a_n\dcore{a_n}}$ is a bounded sequence.
\item  The element $a\in\aa^\dag$, $\suc{a_n^\dag}$ converges to $a^\dag$, and there exists $\psi\in[0, \frac{\pi}{2})$ such that 
$\psi_n=\psi_{a_n}\le \psi$ for all $n\in\ene$.
\end{enumerate}
\end{theorem}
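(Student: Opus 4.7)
My plan is to prove the equivalences through a network of implications resting on three tools: (a)~the polynomial identities of \cite[Theorem 2.19]{rdd} and Lemma~\ref{lema2}(iv), which interrelate $a^\#$, $\core{a}$, $\dcore{a}$, $a^\dag$; (b)~Lemma~\ref{lema2}(ii)--(iii), which ties $\core{a}$ to the inverse of $aa^\dag + a^\dag a - \uno$; and (c)~the bounded-inverse principle for Banach algebras stated just before the theorem.

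First I would knock off the easy links. The identities $a_n^\# = (\core{a_n})^2 a_n$ and $a_n^\# = a_n(\dcore{a_n})^2$ immediately give (i)$\Rightarrow$(iii) and (ii)$\Rightarrow$(iii), while (i)$\Rightarrow$(iv) and (ii)$\Rightarrow$(v) are trivial. Using $a_n^\# a_n = a_n a_n^\#$ and $a_n a_n^\dag a_n = a_n$, a short calculation gives $\core{a_n} a_n = a_n^\# a_n = a_n a_n^\# = a_n \dcore{a_n}$, so the boundedness hypotheses in (vi) and (vii) coincide and yield (vi)$\Leftrightarrow$(vii).

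The central mechanism is Lemma~\ref{lema2}(ii)--(iii). From $(a_n a_n^\dag + a_n^\dag a_n - \uno)\core{a_n} = a_n^\dag$ together with the uniform bound $\|a_n a_n^\dag + a_n^\dag a_n - \uno\| \le 3$, I obtain $\|a_n^\dag\| \le 3\|\core{a_n}\|$; combined with the standard $C^*$-algebra fact that for $a_n \to a$ with $a \in \aa^\dag$, boundedness of $\|a_n^\dag\|$ forces $a_n^\dag \to a^\dag$, this transports any bound on $\|\core{a_n}\|$ into convergence $a_n^\dag \to a^\dag$. Applied under (iv), this gives $a_n^\dag \to a^\dag$, so $a_n a_n^\dag + a_n^\dag a_n - \uno \to aa^\dag + a^\dag a - \uno$; the limit is invertible by Lemma~\ref{lema2}(iii) (since $a \in \core{\aa}$), and the bounded-inverse principle together with the identity $\core{a_n} = (a_n a_n^\dag + a_n^\dag a_n - \uno)^{-1} a_n^\dag$ yields $\core{a_n} \to \core{a}$, proving (iv)$\Rightarrow$(i). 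The adjoint version, via $(\core{a})^* = \dcore{a^*}$, gives (v)$\Rightarrow$(ii). For (vi)$\Rightarrow$(i) I rewrite $(a_n a_n^\dag + a_n^\dag a_n - \uno)^{-1} = \core{a_n} a_n + (\core{a_n} a_n)^* - \uno$ via Lemma~\ref{lema2}(iii); then boundedness of $\|\core{a_n} a_n\|$ plus $a_n^\dag \to a^\dag$ triggers the bounded-inverse principle, producing $a \in \core{\aa}$ and $\core{a_n} \to \core{a}$. The equivalence (vi)$\Leftrightarrow$(viii) comes from the standard relationship between the inverse of $p + q - \uno$ and the maximal angle $\psi(p,q)$ of two self-adjoint projections: invertibility corresponds to $\psi(p,q) < \pi/2$, with the norm of the inverse controlled by $1/\cos\psi(p,q)$; applied via Lemma~\ref{lema2}(iii) to $p = a_n a_n^\dag$ and $q = a_n^\dag a_n$, this translates $\|\core{a_n} a_n\|$ bounded into $\psi_n$ bounded away from $\pi/2$.

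The remaining link (iii)$\Rightarrow$(i) is the step I expect to require the most care. Under (iii), Theorem~\ref{theo310} yields $a \in \core{\aa}$ and so $\core{a} = a^\# a a^\dag$ exists. Continuity of multiplication gives $a_n a_n^\# \to aa^\#$, so the right ideals $a_n\aa$---each equal to the range of the idempotent $a_n a_n^\#$---converge in gap metric to $a\aa$; this in turn upgrades to norm convergence $a_n a_n^\dag \to aa^\dag$ of the self-adjoint range projections. The identity $\core{a_n} = a_n^\#(a_n a_n^\dag)$ then yields $\core{a_n} \to a^\#(aa^\dag) = \core{a}$, closing the cycle.
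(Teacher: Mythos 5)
Your network of implications does not close. Granting that your ``central mechanism'' under (iv) yields $a_n^\dag\to a^\dag$ and hence (iv)$\Rightarrow$(vi) as well as (iv)$\Rightarrow$(i), you do establish the mutual equivalence of (i), (iii), (iv), (vi), (vii), (viii), and separately (ii)$\Leftrightarrow$(v) together with (ii)$\Rightarrow$(iii). But no statement outside the pair $\{$(ii),(v)$\}$ is ever shown to imply (ii) or (v), so for all the written argument shows, those two could be strictly stronger than the rest. The repair is the adjoint trick you already invoke for (v)$\Rightarrow$(ii): since $(a^*)^\#=(a^\#)^*$, statement (iii) for $\suc{a_n}$ is statement (iii) for $\suc{a_n^*}$, so your (iii)$\Rightarrow$(i) applied to the adjoint sequence gives (iii)$\Rightarrow$(ii); this needs to be said. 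A second, smaller issue: in (iii)$\Rightarrow$(i) the phrase ``this in turn upgrades to norm convergence $a_na_n^\dag\to aa^\dag$'' is where the real weight sits. It requires the identity $\|p-q\|=\gap{p\aa}{q\aa}$ for self-adjoint idempotents $p,q$ (so that gap convergence of the right ideals forces norm convergence of the associated self-adjoint range idempotents). That identity is true, but it is not a one-liner in a general $C^*$-algebra and must be proved or cited; as written the step is asserted rather than argued.

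Where your argument is complete, it genuinely differs from the paper's, largely to its advantage. The paper proves (i)$\Leftrightarrow$(iii) and (iv)$\Rightarrow$(iii) by routing everything through the continuity theorem for the group inverse \cite[Theorem 2.4]{kr1}, and proves (iii)$\Rightarrow$(i) via the inequality $\|a_n^\dag\|\le\|a_n^\#\|$ together with \cite[Theorem 1.6]{kol}, splitting off the case $a=0$ each time. You instead read the convergence of $\core{a_n}$ directly off the closed formula $\core{a_n}=(a_na_n^\dag+a_n^\dag a_n-\uno)^{-1}a_n^\dag$ coming from Lemma \ref{lema2}(ii)--(iii) and continuity of inversion, and you get (i)$\Rightarrow$(iii) in one line from $a_n^\#=(\core{a_n})^2a_n$. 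This makes Lemma \ref{lema2} the single engine of the whole theorem and removes the dependence on \cite{kr1}, at the cost of importing the gap-metric fact flagged above in the one implication, (iii)$\Rightarrow$(i), where the paper's more pedestrian route is actually shorter.
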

\begin{proof}  Note  that according to Theorem \ref{theo310},  $\suc{a_n}\subset \core{\aa}\cap\dcore{\aa}\cap\aa^\#\cap\aa^\dag$.

First the equivalence between statements (i) and (iii) will be proved.
Suppose that statement (i) holds. Then according to Theorem \ref{theo310}, $a\in\aa^\#$. In addition, $\suc{\core{a}_n a_n}$ converges to $\core{a}a$.
However, according to  \cite[Remark 2.17]{rdd},
$a^\#a=\core{a}a$, and for each $n\in\ene$, $a_n^\#a_n=\core{a}_na_n$. Consequently,
 $\suc{a_n^\#a_n}$ converges to $a^\#a$, which according to \cite[Theorem 2.4]{kr1}, implies that
 $\suc{a_n^\#}$ converges to $a^\#$. 

Suppose that statement (iii) holds. Note that according to \cite[Theorem 6]{hm1}, $a\in\aa^\dag$. In particular, according to Theorem \ref{theo310},
$a\in \core{\aa}$. Moreover, according to \cite[Corollary 2.1 (ii)]{bc} and 
\cite[Equation (2.1)]{cbl2},
$$
\| a_n^\dag \| = \| (a_n a_n^\dag +a_n^\dag a_n - \uno) a_n^\#
(a_n a_n^\dag +a_n^\dag a_n - \uno) \| \leq
\|a_n a_n^\dag +a_n^\dag a_n - \uno\|^2 \| a_n^\#\| \leq \| a_n^\# \|.
$$
\noindent Consequently,  $\suc{a_n^\dag}$ is a bounded sequence. 

Now two cases need to be considered. If $a=0$, then $\core{a}=0$. However, according to \cite[Theorem 2.19]{rdd}, $\core{a_n}=a_n^\#a_na_n^\dag$. Since  $\suc{a_n}$ converges to $0$ and 
$\suc{a_n^\#}$ and $\suc{a_n^\dag}$ are bounded sequences, $\suc{\core{a_n}}$ converges to $0$. 

Now suppose that $a\neq 0$. Since $\suc{a_n}$ converges to $a$, there exists and $n_0\in\ene$ such that $a_n\neq 0$, $n\ge n_0$. Without loss of generality, it is possible to assume that
$\suc{a_n}\subset\core{\aa}\setminus\{ 0\}$. Thus, according to \cite[Theorem 1.6]{kol}, $\suc{a_n^\dag}$ converges to $a^\dag$. However, according again to   \cite[Theorem 2.19]{rdd},
$\core{a}=a^\#aa^\dag$ and for each $n\in\ene$,  $\core{a}_n=a_n^\# a_n a_n^\dag$. 
Therefore, $\suc{\core{a}_n}$ converges to $\core{a}$.

To prove the equivalence between statements (ii) and (iii), apply a similar argument to the one used to prove the equivalence
between statements (i) and (iii). In particular, use the following identities, which holds for $b\in\dcore{\aa}$: ($\alpha$) $b^\# b=b\dcore{b}$
(\hspace{-1pt}\cite[Remark 2.17]{rdd}); ($\beta$) $\dcore{b}=b^\dag bb^\#$ (\hspace{-1pt}\cite[Theorem 2.19]{rdd}).

It is evident that statement (i) implies statement (iv). Now suppose that statement (iv) holds. It will be proved that statement (iii) holds.
According to Theorem \ref{theo310}, $a\in\aa^\#$.
In addition, according to \cite[Theorem 2.19]{rdd}, for each $n\in\ene$, $a_n^\#=(\core{a_n})^2a_n$. In particular, $\suc{a_n^\#}$ is a bounded sequence.
Consequently, according to \cite[Theorem 2.4]{kr1}, $\suc{a_n^\#}$ converges to $a^\#$, equivalently, statement (iii) holds.

The equivalence between statements (ii) and (v) can be proved applying a similar argument to the one used to prove the equivalence between statements (i) and (iv),
using in particular Lemma \ref{lema2} (iv).

Next it will be proved that statement (iv) implies statement (vi). Suppose then that statement (iv) holds. Then, $\suc{\core{a_n}a_n}$ is a bounded sequence. In addition, 
according to Theorem \ref{theo310}, $a\in\aa^\dag$. Now two cases need to be considered. Suppose first that $a=0$. Since statement (iv) and (v) are equivalent,
$\suc{\dcore{a_n}}$ is a bounded sequence. According to \cite[Theorem 2.19]{rdd}, for each $n\in\ene$, $a_n^\dag=\dcore{a_n}a_n\core{a_n}$. Since $\suc{\core{a_n}}$
and $\suc{\dcore{a_n}}$ are bounded sequences, $\suc{a_n^\dag}$ converges to $0=a^\dag$. 

If $a\neq 0$, as when it was proved that statement (iii) implies statement (i), it is possible to assume that $\suc{a_n}\subset\aa\setminus\{ 0\}$. According to Lemma \ref{lema2} (ii),
$$
\parallel a_n^\dag\parallel\le\parallel a_na_n^\dag +a_n^\dag a_n-\uno\parallel\parallel\core{a_n}\parallel\le 3\parallel\core{a_n}\parallel.
$$
\noindent In particular, $\suc{a_n^\dag}$ is a bounded sequence. However, according to \cite[Theorem 1.6]{kol}, $\suc{a_n^\dag}$ converges to $a^\dag$.

Suppose that statement (vi) holds. It will be proved that statement (vi) implies statement (iv). According to \cite[Theorem 2.19]{rdd}, for each $n\in\ene$, 
$\core{a_n}=\core{a_n}a_na_n^\dag$. Since $\suc{a_n^\dag}$ and $\suc{\core{a_n}a_n}$ are bounded sequences, $\suc{\core{a_n}}$ is a bounded sequence.

Note that according to Lemma \ref{lema2} (iii), for each $n\in\ene$, $b_n=a_na_n^\dag +a_n^\dag a_n-\uno$ is invertible and $b_n^{-1}=\core{a_n}a_n+(\core{a_n}a_n)^*-\uno$.
In addition, the sequence $\suc{b_n^{-1}}$ is bounded. In fact, according to \cite[Lemma 2.3]{kr}, $\parallel b_n^{-1}\parallel=\parallel\core{a_n}a_n\parallel$. 
Now,
since $\suc{b_n}$ converges to $b=aa^\dag +a^\dag a-\uno$, the element $b$ is invertible, which in view of Lemma \ref{lema2} (iii), is equivalent to 
 $a\in\core{\aa}$.  

According to \cite[Theorem 2.19]{rdd}, for each $n\in\ene$, $\core{a_n}a_n=a_n\dcore{a_n}$. Thus, statement (vii) is an equivalent formulation of statement (vi).

Finally, statements (vi) and (viii) will be proved to be equivalent. In fact, note that if $a_n=0$, then $\psi_n=0$. In addition, according to Lemma \ref{lema2} (iii), $a_na_n^\dag+a_n^\dag a_n-\uno$ is invertible, and  when $a_n\neq0$, according to  
Lemma \ref{lema2} (iii), \cite[Theorem 2.4 (iii)]{cbl2} and \cite[Lemma 2.3]{kr},
$$
\frac{1}{\cos \psi_n}=\parallel (a_na_n^\dag +a_n^\dag a_n-\uno)^{-1}\parallel= \parallel \core{a_n}a_n+(\core{a_n}a_n)^*-\uno\parallel= \parallel \core{a_n}a_n\parallel.
$$
\noindent In particular, $\suc{\core{a_n}a_n}$ is bounded if and only if there exists $\psi\in[0, \frac{\pi}{2})$ such that $\psi_n\le\psi$ for all $n\in\ene$.
\end{proof}

Theorem \ref{thm320} shows that the continuity of the group inverse and of the Moore-Penrose inverse are central for the continuity of the core inverse and the dual core inverse.
To learn more on the continuity of the group inverse and the Moore-Penrose inverse, see for example \cite{cbl, cbl2, kr1, V, rs} and \cite{hm, kol, mb, rs, V2}, respectively,
see also \cite[Chapter 4]{dr}.

Observe that the conditions in statement (vi) of Theorem \ref{thm320}, ($\alpha$) $a \in \aa^\dag$, $a_n^\dag \to a^\dagger$, and 
($\beta$) $\{ \core{a_n} a_n \}$ is a bounded sequence,  are independent from each other, as the following two examples
show.

\begin{example}\label{example1}
{\rm 
Consider $\ce$ as a $C^*$-algebra. Let $a_n = 1/n$ 
and $a=0$. It is evident that $a_n \to a$, $a_n^\dag = n$, and $\suc{ a_n^\dag }$ does not
converge to $a^\dag =0$. However, it should be clear that $\core{a}_n=n$. Therefore, 
$\core{a}_n a_n = 1$, and thus, $\suc{ \core{a_n} a_n }$ is a bounded sequence.
}
\end{example}

\begin{example}\label{ejemplo2}
{\rm 
Consider the set of $2 \times 2$ complex matrices as a $C^*$-algebra. Take
the conjugate transpose of the matrix as the involution on this matrix. Let $\suc{\psi_n}$ be 
a sequence in $(0,\pi/2)$ such that $\psi_n \to \pi/2$ and let 
\begin{align*}
& &&A_n = \left[ \begin{array}{cc} \cos \psi_n & \sin \psi_n  \\ 0 & 0 \end{array} \right],& 
&A = \left[ \begin{array}{cc} 0 & 1 \\ 0 & 0 \end{array} \right]. &\\
\end{align*}
It is simple prove  that 
\begin{align*}
&A_n^\dag = \left[ \begin{array}{cc} \cos \psi_n & 0 \\ \sin \psi_n & 0 \end{array} \right], &
&A^\dag = \left[ \begin{array}{cc} 0 & 0 \\ 1 & 0 \end{array} \right],&
&\core{A}_n = \left[ \begin{array}{cc} 1/\cos \psi_n & 0 \\ 0 & 0 \end{array} \right].&
\end{align*}
Therefore, $\suc{A_n^\dag}$ converges to $A^\dag$ and 
\begin{align*}
& &&\core{A}_n A_n = \left[ \begin{array}{cc} 1 & \tan \psi_n \\ 0 & 0 \end{array} \right],&\\
\end{align*}
which shows that $\suc{ \core{A}_n A_n }$ is not bounded. Note also that $\suc{\core{A_n}}$ is not a convergent sequence.
}
\end{example}

\indent Observe also that if $\aa$ is a unital $C^*$-algebra and $\suc{a_n}\subset\core{\aa}$ is such that $\suc{a_n}$ converges to $a\in\aa$,
Example \ref{example1} also shows that the condition $\suc{\core{a_n}a_n}$ is a convergent sequence does not implies that 
$\suc{\core{a_n}}$ is convergent.

\indent It is worth noticing that Example \ref{ejemplo2} also proves that $\core{\aa}=\dcore{\aa}$ is not in general a closed set. 
In fact, using the same notation as in Example \ref{ejemplo2}, $\suc{A_n}\subset\core{\aa}$, $\suc{A_n}$ converges to $A$ but
$A\notin\core{\aa}$ ($A^2=0$, $\rk(A^2)=0\neq 1=\rk(A)$, i.e., $A$ is not group invertible).

\indent Next  an extension of \cite[Theorem 2.7]{cbl2} will be derived from Theorem \ref{thm320}.

\begin{corollary}\label{cor3900} Let $\aa$ be a unital $C^*$-algebra and consider $a\in\aa$. Suppose that the sequence $\suc{a_n}\subset\aa^\#$ is such that
$\suc{a_n}$ converges to $a$. Then, the  following statements are equivalent.
\begin{enumerate}[{\rm (i)}]
\item  The element $a\in\aa^\#$ and $\suc{a_n^\#}$ converges to $a^\#$.
\item The sequence $\suc{a_n^\#}$ is bounded.
\item The element $a\in\aa^\dag$, $\suc{a_n^\dag}$ converges to $a^\dag$, and the sequence $\suc{a_n^\# a_n}$ is bounded. 
\item  The element $a\in\aa^\dag$, $\suc{a_n^\dag}$ converges to $a^\dag$, and there exists $\psi\in[0, \frac{\pi}{2})$ such that 
$\psi_n=\psi_{a_n}\le \psi$ for all $n\in\ene$.
\end{enumerate}
\end{corollary}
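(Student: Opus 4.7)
The plan is to reduce the corollary to Theorem \ref{thm320} by identifying its statements with those of the Theorem under the standing hypothesis $\suc{a_n}\subset\aa^\#$. By Theorem \ref{theo310}, this hypothesis is the same as $\suc{a_n}\subset\core{\aa}=\dcore{\aa}$, so Theorem \ref{thm320} is directly applicable. Moreover, for every $b\in\aa^\#$, \cite[Remark 2.17]{rdd} gives $b^\# b=\core{b}b$; thus the boundedness of $\suc{a_n^\# a_n}$ coincides with the boundedness of $\suc{\core{a_n}a_n}$. With these identifications, Corollary \ref{cor3900} (i), (iii), and (iv) are literally the statements (iii), (vi), and (viii) of Theorem \ref{thm320}, respectively, so they are already mutually equivalent.

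It only remains to incorporate (ii) into the chain. The implication (i)$\Rightarrow$(ii) is immediate, since every convergent sequence is bounded. For the substantive direction (ii)$\Rightarrow$(iii), I would first invoke \cite[Corollary 2.1 (ii)]{bc} in the form already used in the proof of Theorem \ref{thm320}, namely
\begin{equation*}
\|a_n^\dag\|\le\|a_na_n^\dag+a_n^\dag a_n-\uno\|^2\|a_n^\#\|\le\|a_n^\#\|,
\end{equation*}
to conclude that $\suc{a_n^\dag}$ is bounded. The boundedness of $\suc{a_n^\# a_n}$ then follows by submultiplicativity.

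The main (and only delicate) obstacle is to show that $a\in\aa^\dag$ and $\suc{a_n^\dag}$ actually converges to $a^\dag$. When $a\neq 0$, this is precisely \cite[Theorem 1.6]{kol} applied to the already bounded sequence $\suc{a_n^\dag}$. When $a=0$, Koliha's theorem does not apply, and the argument has to be rerouted through the core and dual core inverses: using the identities $\core{a_n}=a_n^\# a_n a_n^\dag$ and $\dcore{a_n}=a_n^\dag a_n a_n^\#$ from \cite[Theorem 2.19]{rdd}, the boundedness of both $\suc{a_n^\#}$ and $\suc{a_n^\dag}$ together with $a_n\to 0$ forces $\core{a_n}\to 0$ and $\dcore{a_n}\to 0$; plugging these into $a_n^\dag=\dcore{a_n}a_n\core{a_n}$ (again from \cite[Theorem 2.19]{rdd}) yields $a_n^\dag\to 0=a^\dag$. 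This closes the chain of implications.
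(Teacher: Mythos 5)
Your proof is correct and follows essentially the same strategy as the paper's: reduce as much as possible to Theorem \ref{thm320} (the paper likewise sends (iv)$\Rightarrow$(i) to the equivalence of statements (iii) and (viii) there), and handle the one substantive implication (ii)$\Rightarrow$(iii) through the bound $\| a_n^\dag\|\le\| a_na_n^\dag+a_n^\dag a_n-\uno\|^2\| a_n^\#\|\le\| a_n^\#\|$ from \cite[Corollary 2.1 (ii)]{bc}. The only organizational differences are minor: the paper proves (iii)$\Leftrightarrow$(iv) directly from the formula $\|a_na_n^\#\|=1/\cos\psi_{a_n}$ of \cite[Theorem 2.5]{cbl2}, whereas you obtain it for free from the identity $a_n^\# a_n=\core{a_n}a_n$ and the equivalence of statements (vi) and (viii) of Theorem \ref{thm320}; and the paper outsources the convergence $a_n^\dag\to a^\dag$ in (ii)$\Rightarrow$(iii) to the proof of \cite[Theorem 2.7]{cbl2}, whereas you supply the details yourself (Koliha's theorem when $a\neq0$, and the factorization $a_n^\dag=\dcore{a_n}a_n\core{a_n}$ with bounded factors tending to $0$ when $a=0$). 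Both routes are sound.
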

\begin{proof} Statement (ii) is a consequence of statement (i). 

Suppose that statement (ii) holds. Then, $\suc{a_n^\# a_n}$ is a bounded  sequence. To prove that $a\in\aa^\dag$ and $\suc{a_n^\dag}$ converges to $a^\dag$, proceed as
in the corresponding part of the proof of  \cite[Theorem 2.7]{cbl2} (see statement (ii) implies statement (iii) in  \cite[Theorem 2.7]{cbl2}).

Suppose that statement (iii) holds. First note that if $a_n=0$, then $\psi_n=0$. In addition, according to  \cite[Theorem 2.5]{cbl2}, if $a_n\neq 0$, then,
$$
\|a_na_n^\# \|=\frac{1}{\cos \psi_{a_n}}.
$$
\noindent Therefore, the sequence $\suc{a_n^\# a_n}$ is bounded
if and only if there exists $\psi\in[0, \frac{\pi}{2})$ such that $\psi_n=\psi_{a_n}\le \psi$ for all $n\in\ene$.

To prove that statement (iv) implies statement (i), apply Theorem \ref{thm320} (equivalence between statements (iii) and (viii)).
\end{proof}

In Theorem \ref{thm320} and Corollary \ref{cor3900} the general case has been presented for sake of completeness. However, the case $a=0$ is particular and it  deserves to be studied.
Recall that given a unital $C^*$-algebra $\aa$, if $\suc{a_n}\subset\aa^{-1}$ is such that $\suc{a_n}$ converges to 0, then the sequence 
$\suc{a_n^{-1}}$ is unbounded. Next the case of a sequence $\suc{a_n}\subset\aa^\#=\core{\aa}=\dcore{\aa}\subseteq \aa^\dag$ such that  it converges to $0$ will be studied.
In first place the Moore-Penrose inverse will be considered.

\begin{remark}\label{rem3950}\rm Let $\aa$ be a unital $C^*$-algebra and consider $a\in\aa^\dag$ and $\suc{a_n}\subset\aa^\dag$ such that $\suc{a_n}$ converges to $a$.
Recall that according to \cite[Theorem 1.6]{kol},
the following statements are equivalent.
\begin{enumerate}[{\rm (i)}]
\item The sequence $\suc{a_n^\dag}$ converges to $a^\dag$.
\item The sequence $\suc{a_n a_n^\dag}$ converges to $aa^\dag$.
\item The sequence $\suc{a_n^\dag a_n}$ converges to $a^\dag a$.
\item The sequence $\suc{a_n^\dag}$ is bounded.

\hspace*{\dimexpr\linewidth-\textwidth\relax}{\noindent Now when $a=0$, according to \cite[Theorem 1.3]{kol}, the following equivalence holds:}

\item A  necessary and sufficient condition for $\suc{a_n^\dag}$ to converge to 0 is that the sequence $\suc{a_n^\dag}$ is bounded.

 \hspace*{\dimexpr\linewidth-\textwidth\relax}
\begin{minipage}[t]{\textwidth}
\noindent However, concerning the convergence of $\suc{a_n a_n^\dag}$, note that given $n\in\ene$, since $a_n a_n^\dag$ is
a self-adjoint idempotent, if $\parallel a_n a_n^\dag\parallel < 1$, then $a_n a_n^\dag =0$, which 
implies that $a_n=0$; \hskip.1truecm a similar result can be derived for the convergence of $\suc{a_n^\dag a_n}$. Consequently, the following statements are equivalent.
\end{minipage}

\item The sequence $\suc{a_n^\dag}$ converges to 0.
\item There exists $n_0\in\ene$ such that for $n\ge n_0$, $a_n=0$.

\hspace*{\dimexpr\linewidth-\textwidth\relax}{\noindent Therefore, according to statements (v)-(vii), given $\suc{a_n}\subset\aa^\dag$ such that $\suc{a_n}$ converges to $0$, there are only two possibilities.}

\item There exists $n_0\in\ene$ such that for $n\ge n_0$, $a_n=0$; or
\item the sequence $\suc{a_n^\dag}$ is unbounded.
\end{enumerate}
\end{remark}

In the following proposition, sequences of group invertible or (dual) core invertible elements that converge to 0 will be studied.

\begin{proposition}\label{prop3960} Let $\aa$ be a unital $C^*$-algebra and consider a sequence $\suc{a_n}\subset \aa^\#=\core{\aa}=\dcore{\aa}$.
such that  $\suc{a_n}$ converges to 0. The following statements are equivalent.
\begin{enumerate}[{\rm (i)}]
\item The sequence $\suc{a_n^\#}$  converges to 0.
\item The sequence $\suc{\core{a_n}}$  converges to 0.
\item The sequence $\suc{\dcore{a_n}}$  converges to 0.
\item The sequence $\suc{a_n^\#}$ is bounded.
\item There exists $n_0\in\ene$ such that for $n\ge n_0$, $a_n=0$.

\hspace*{\dimexpr\linewidth-\textwidth\relax}{\noindent In addition, there exist only two possibilities for the sequence $\suc{a_n}$.}

\item There exists $n_0\in\ene$ such that for $n\ge n_0$, $a_n=0$; or
\item the sequence $\suc{a_n^\#}$ is unbounded.

\hspace*{\dimexpr\linewidth-\textwidth\relax}{\noindent Moreover, statement {\rm (vii)} is equivalent to the following two statements.}

\item the sequence $\suc{\core{a_n}}$ is unbounded.
\item the sequence $\suc{\dcore{a_n}}$ is unbounded.
\end{enumerate}
\end{proposition}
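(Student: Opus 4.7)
The plan is to run a short cycle \textup{(i)} $\Longrightarrow$ \textup{(iv)} $\Longrightarrow$ \textup{(v)} $\Longrightarrow$ \textup{(i)}, to close independently \textup{(ii)} $\Longleftrightarrow$ \textup{(v)} and \textup{(iii)} $\Longleftrightarrow$ \textup{(v)}, and then to read off the dichotomy \textup{(vi)}/\textup{(vii)} and the equivalences \textup{(vii)} $\Longleftrightarrow$ \textup{(viii)} $\Longleftrightarrow$ \textup{(ix)} as formal consequences. The governing observation is that for $x \in \core{\aa}$, each of $xx^\dag$, $x\core{x}$ and $\dcore{x}x$ is a self-adjoint idempotent; since a self-adjoint idempotent $p$ in a $C^*$-algebra satisfies $\|p\|^2 = \|p^*p\| = \|p\|$, one has $\|p\| \in \{0,1\}$, so any sequence of such idempotents tending to $0$ must vanish for all sufficiently large indices.

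The implications \textup{(v)} $\Longrightarrow$ \textup{(i)},\textup{(ii)},\textup{(iii)} are immediate (if $a_n=0$ then $a_n^\# = \core{a_n} = \dcore{a_n} = 0$), and \textup{(i)} $\Longrightarrow$ \textup{(iv)} is just that a convergent sequence is bounded. For \textup{(iv)} $\Longrightarrow$ \textup{(v)} I would apply Corollary \ref{cor3900} (implication \textup{(ii)} $\Longrightarrow$ \textup{(iii)}) with $a=0$ to obtain $\suc{a_n^\dag}$ converging to $a^\dag = 0$; then $\|a_n a_n^\dag\| \le \|a_n\|\,\|a_n^\dag\| \to 0$, so the norm dichotomy forces $a_n a_n^\dag = 0$ eventually, whence $a_n = a_n a_n^\dag a_n = 0$ eventually. (Equivalently, one may invoke Remark \ref{rem3950} \textup{(vi)}$\Longleftrightarrow$\textup{(vii)} once $a_n^\dag \to 0$ has been established.)

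For \textup{(ii)} $\Longrightarrow$ \textup{(v)}: the characterization of the core inverse recalled just after Definition \ref{df1} gives $(a_n \core{a_n})^* = a_n \core{a_n}$, while the outer inverse relation $\core{a_n}\, a_n\, \core{a_n} = \core{a_n}$ yields $(a_n \core{a_n})^2 = a_n \core{a_n}$; thus $a_n \core{a_n}$ is a self-adjoint idempotent. Since $\|a_n \core{a_n}\| \le \|a_n\|\,\|\core{a_n}\| \to 0$, the norm dichotomy forces $a_n \core{a_n} = 0$ eventually, hence $a_n = 0$ eventually. The implication \textup{(iii)} $\Longrightarrow$ \textup{(v)} is identical, working with the self-adjoint idempotent $\dcore{a_n}\, a_n$.

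The dichotomy \textup{(vi)}/\textup{(vii)} is then the contrapositive of \textup{(iv)} $\Longleftrightarrow$ \textup{(v)}. For \textup{(vii)} $\Longleftrightarrow$ \textup{(viii)}, the identity $a_n^\# = (\core{a_n})^2 a_n$ (cf. \cite[Theorem 2.19]{rdd}) together with $a_n \to 0$ shows that $\suc{\core{a_n}}$ bounded implies $\suc{a_n^\#}$ bounded; the converse follows because $\suc{a_n^\#}$ bounded yields \textup{(v)} and hence $\core{a_n} = 0$ eventually. The equivalence \textup{(vii)} $\Longleftrightarrow$ \textup{(ix)} is analogous, using $a_n^\# = a_n (\dcore{a_n})^2$ from Lemma \ref{lema2}\textup{(iv)}. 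The only non-routine step is \textup{(iv)} $\Longrightarrow$ \textup{(v)}, where Corollary \ref{cor3900} is needed to pass from boundedness of $\suc{a_n^\#}$ to convergence of $\suc{a_n^\dag}$; everything else is a direct consequence of the self-adjoint idempotent norm dichotomy.
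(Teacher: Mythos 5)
Your proof is correct, and while it rests on the same central mechanism as the paper's, it is organized differently and is more self-contained in two places. The paper disposes of the equivalence of (i)--(iii) and of (vii)--(ix) by citing Theorem \ref{thm320} (applied with $a=0$), whereas you prove (ii) $\Leftrightarrow$ (v) and (iii) $\Leftrightarrow$ (v) directly from the norm dichotomy for the self-adjoint idempotents $a_n\core{a_n}$ and $\dcore{a_n}a_n$, and you obtain (vii) $\Leftrightarrow$ (viii) $\Leftrightarrow$ (ix) from the identities $a_n^{\#}=(\core{a_n})^2a_n$ and $a_n^{\#}=a_n(\dcore{a_n})^2$ (Lemma \ref{lema2} (iv)) together with the already-established (iv) $\Leftrightarrow$ (v); this is arguably cleaner, since the statements of Theorem \ref{thm320} are phrased in terms of convergence rather than raw unboundedness, so the paper's one-line citation there implicitly also relies on Corollary \ref{cor3900}. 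For the single non-routine implication (iv) $\Rightarrow$ (v) the two arguments coincide in spirit: both pass to the Moore--Penrose inverse and then exploit the fact that $a_na_n^\dag$ is a self-adjoint idempotent, which is exactly the content of Remark \ref{rem3950} (vi)--(vii) that you correctly note can be invoked instead of your direct computation. The only divergence is in how control of $\suc{a_n^\dag}$ is obtained: the paper uses the bound $\|a_n^\dag\|\le\|a_n^\#\|\,\|a_na_n^\dag+a_n^\dag a_n-\uno\|^2\le 9\|a_n^\#\|$ from \cite[Corollary 2.1 (ii)]{bc} to get boundedness and then Remark \ref{rem3950} (v), while you invoke Corollary \ref{cor3900} (ii) $\Rightarrow$ (iii) to get convergence outright; both are legitimate, as that corollary precedes the proposition and its relevant implication is proved by an external reference, so no circularity arises. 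In short, your route trades the paper's brevity for a more elementary and transparent treatment of the core- and dual-core-specific statements.
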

\begin{proof} According to Theorem \ref{thm320}, statements (i)-(iii) are equivalent.

It is evident that statement (i) implies statement (iv).

Suppose that statement (iv) holds. According to \cite[Theorem 6]{hm1}, $\suc{a_n}\subset \aa^\#\subset \aa^\dag$. 
In addition according to \cite[Corollary 2.1 (ii)]{bc},
$$
\parallel a_n^\dag\parallel\le \parallel a_n^\#\parallel \parallel a_na_n^\dag+a_n^\dag a_n-\uno\parallel^2\le 9 \parallel a_n^\#\parallel. 
$$
\noindent In particular, the sequence $\suc{a_n^\dag}$ is bounded. Thus, according to Remark \ref{rem3950} (v), $\suc{a_n^\dag}$ converges to 0. However, according to Remark \ref{rem3950} (vi)-(vii),
statement (v) holds.

It is evident that statement (v) implies statement (i).

Statements (vi) and (vii) can be derived from what has been proved.

According to Theorem  \ref{thm320}, statements (vii)-(ix) are equivalent.
\end{proof}
To prove the second characterization of this section some preparation is needed.

\begin{remark}\label{nota2}\rm Let $\aa$ be a unital $C^*$-algebra and consider $a \in \core{\aa}=\dcore{\aa}$. If $L_a\colon \aa\to \aa$ and $R_a\colon \aa\to \aa$ are the left and the right multiplication operators defined by $a$, i.e., for $x\in\aa$, $L_a(x)=ax$,  $R_a(x)=xa$, respectively, then according to \cite[Theorem 2.14]{rdd},
\begin{align*}
& &&L_{\core{a}}L_a L_{\core{a}}=L_{\core{a}},& &R_{\core{a}}R_a R_{\core{a}}=R_{\core{a}}.&&\\
\end{align*}

Note also that according to Definition \ref{df1},
\begin{align*}
& &&\rr(L_{\core{a}})=a\aa,&  &\nn(L_{\core{a}})=(a^*)^\circ.&\\
& &&\rr(R_{\core{a}})=\aa a^*,&  &\nn(R_{\core{a}})={}^\circ a.&\\
\end{align*}
Therefore, $L_{\core{a}} = (L_a)^{(2)}_{a \aa, (a^*)^\circ}$ and $R_{\core{a}} = (R_a)^{(2)}_{ \aa a^*, {}^\circ a}$.

\noindent In addition, since $L_{a \core{a}}=L_a L_{\core{a}}$, $R_{a \core{a}}=R_{\core{a}}R_a\in\ll (\aa)$ are idempotents, observe that according to
Definition \ref{df1} and \cite[Theorem 2.14]{rdd},
\begin{align*}
& &&\rr(L_{a \core{a}})=a \aa,&  &\rr(R_{a \core{a}})=\aa a^*,&\\
& &&\nn(L_{a \core{a}})=(a^*)^\circ ,& &\nn(R_{a \core{a}})={}^\circ a.\\
\end{align*}

 Similar arguments prove the following facts: $L_{\dcore{a}} = (L_a)^{(2)}_{a^* \aa, a^\circ}$, $R_{\dcore{a}} = (R_a)^{(2)}_{ \aa a, {}^\circ(a^*)}$ and
\begin{align*}
& &&\rr(L_{\dcore{a}a})=a^* \aa,&  &\rr(R_{\dcore{a}a})=\aa a,&\\
& &&\nn(L_{\dcore{a}a})=a^\circ ,& &\nn(R_{\dcore{a}a})={}^\circ (a^*).\\
\end{align*}
\end{remark}

Next follows the second characterization of the continuity of the (dual) core inverse.
In this case, the notion of the gap between subspaces will be used.

\begin{theorem} \label{th1}
Let $\aa$ be a unital $C^*$-algebra and consider $a \in \core{\aa}=\dcore{\aa}$, $a\neq 0$.
Consider a sequence $\suc{a_n} \subset \core{\aa}=\dcore{\aa}$ such that $\suc{a_n}$ converges to $a$. The following
statements are equivalent.
\begin{enumerate}[{\rm (i)}]

\item $\suc{\core{a}_n}$ converges to $\core{a}$.

\item $\suc{a_n \core{a}_n}$ converges to $a \core{a}$.
	
\item $\suc{\gap{a_n \aa}{a\aa}}$ and $\suc{\gap{(a_n^*)^\circ}{(a^*)^\circ}}$
	converge to $0$.
\item $\suc{\gap{\aa a_n^*}{\aa a^*}}$ and $\suc{\gap{{}^\circ a_n}{{}^\circ a}}$
	converge to $0$.

\item $\suc{\dcore{a_n}}$ converges to $\dcore{a}$.

\item $\suc{\dcore{a_n} a_n}$ converges to $\dcore{a} a$.
	
\item $\suc{\gap{a_n^* \aa}{a^*\aa}}$ and $\suc{\gap{a_n^\circ}{a^\circ}}$
	converge to $0$.
\item $\suc{\gap{\aa a_n}{\aa a}}$ and $\suc{\gap{{}^\circ (a_n^*)}{{}^\circ (a^*)}}$
	converge to $0$.

\end{enumerate}
\end{theorem}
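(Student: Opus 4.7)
The plan is to reduce all eight conditions to the convergence of the two self-adjoint projections $aa^\dag$ and $a^\dag a$. The first step is the identities $a\core{a} = aa^\dag$ and $\dcore{a}a = a^\dag a$: both pairs consist of self-adjoint idempotents with the same one-sided image ideal ($a\aa$ on the left, $\aa a$ on the right), and in a $C^*$-algebra a self-adjoint idempotent is determined by its image ideal --- from $p\aa = q\aa$, writing $p = q\alpha$ and $q = p\beta$ gives $qp = p$ and $pq = q$, whence $p = p^* = (qp)^* = pq = q$, and similarly for $\aa p = \aa q$. Under these identifications (ii) reads $a_n a_n^\dag \to aa^\dag$ and (vi) reads $a_n^\dag a_n \to a^\dag a$, and by Remark \ref{rem3950} both are equivalent to $a_n^\dag \to a^\dag$; in particular (ii) $\Leftrightarrow$ (vi).

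Next I would prove (i) $\Leftrightarrow$ (ii) (and symmetrically (v) $\Leftrightarrow$ (vi), the bridge (i) $\Leftrightarrow$ (v) being already part of Theorem \ref{thm320}). The implication (i) $\Rightarrow$ (ii) is trivial. For the converse, (ii) gives $a_n^\dag \to a^\dag$ by the previous paragraph, and $\sin \psi_n = \|a_n a_n^\dag - a_n^\dag a_n\| \to \|aa^\dag - a^\dag a\| = \sin \psi_a$. Since $a \in \core{\aa} \subseteq \aa^\#$ (Theorem \ref{theo310}), Lemma \ref{lema2} (iii) forces $aa^\dag + a^\dag a - \uno$ to be invertible, so $\|aa^\dag - a^\dag a\| < 1$, i.e.\ $\psi_a < \pi/2$. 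Hence $\psi_n$ is eventually bounded above by some $\psi < \pi/2$, and since each $a_n \in \core{\aa}$ already ensures $\psi_n < \pi/2$, the bound is uniform in $n$. Theorem \ref{thm320}, (viii) $\Rightarrow$ (i), then yields $\core{a_n} \to \core{a}$.

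The remaining equivalences (ii) $\Leftrightarrow$ (iii), (iv) and (vi) $\Leftrightarrow$ (vii), (viii) rephrase the convergence of the self-adjoint projections in terms of gaps between subspaces. Since $a \mapsto L_a$ and $a \mapsto R_a$ are isometric embeddings of $\aa$ into $\ll(\aa)$, the convergence $a_n a_n^\dag \to aa^\dag$ is equivalent to each of $L_{a_n\core{a_n}} \to L_{a\core{a}}$ and $R_{a_n\core{a_n}} \to R_{a\core{a}}$ (using the identifications), and these are idempotents on $\aa$ of norm at most $1$. The classical gap theorem for uniformly bounded idempotents (see \cite[Chapter 4]{dr} and \cite{kato}) equates such norm convergence with the simultaneous vanishing of the gaps between their ranges and between their null spaces. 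Plugging in the image and null ideals collected in Remark \ref{nota2} --- $\rr(L_{a\core{a}}) = a\aa$, $\nn(L_{a\core{a}}) = (a^*)^\circ$, $\rr(R_{a\core{a}}) = \aa a^*$, $\nn(R_{a\core{a}}) = {}^\circ a$, and symmetrically $\rr(L_{\dcore{a}a}) = a^*\aa$, $\nn(L_{\dcore{a}a}) = a^\circ$, $\rr(R_{\dcore{a}a}) = \aa a$, $\nn(R_{\dcore{a}a}) = {}^\circ(a^*)$ --- delivers (iii), (iv), (vii) and (viii). The main obstacle is the reverse direction of this idempotent-gap lemma (from vanishing gaps to norm convergence), which requires the uniform bound $\|L_{p_n}\| \le 1$ together with a decomposition $x = Px + (x - Px)$ approximated from $\rr(P_n)$ and $\nn(P_n)$ via the two vanishing gaps; everything else in the argument is bookkeeping around Theorem \ref{thm320} and Remark \ref{nota2}.
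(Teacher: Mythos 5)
Your proof is correct, but it takes a genuinely different route from the paper's. The paper proves (iii) $\Rightarrow$ (i) (and its three analogues) directly, by viewing $L_{\core{a}}$ as the outer inverse $(L_a)^{(2)}_{a\aa,(a^*)^\circ}$ and invoking the perturbation theorem \cite[Theorem 3.5]{dx}, which yields an explicit quantitative bound for $\|\core{a_n}-\core{a}\|$ in terms of the gaps and $\|a-a_n\|$; Theorem \ref{thm320} is used only for the bridge (i) $\Leftrightarrow$ (v). You instead collapse (ii), (iii), (iv), (vi), (vii), (viii) onto the single condition $a_n^\dag\to a^\dag$, via the identities $a\core{a}=aa^\dag$ and $\dcore{a}a=a^\dag a$ (both correct: $a\core{a}=aa^\# aa^\dag=aa^\dag$, and your uniqueness argument for self-adjoint idempotents with equal image ideals is also valid), Remark \ref{rem3950}, and an elementary two-gap estimate $\|P_n-P\|\le \|I-P_n\|\,\|P\|\,\delta(\rr(P),\rr(P_n))+\|P_n\|\,\|I-P\|\,\delta(\nn(P),\nn(P_n))$ for the uniformly bounded idempotents $L_{a_na_n^\dag}$, $R_{a_na_n^\dag}$, $L_{a_n^\dag a_n}$, $R_{a_n^\dag a_n}$; you then recover (i) from Theorem \ref{thm320} (viii) by observing that the standing hypothesis $a\in\core{\aa}$, $a\neq 0$ forces $aa^\dag+a^\dag a-\uno$ to be invertible, hence $\sin\psi_a=\|aa^\dag-a^\dag a\|<1$ (via $(p+q-\uno)^2=\uno-(p-q)^2$), and $\sin\psi_n\to\sin\psi_a$ then gives the required uniform angle bound. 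What your approach buys is that it avoids \cite[Theorem 3.5]{dx} entirely and isolates the clean fact that, when the nonzero limit is itself core invertible, convergence of the Moore--Penrose inverses alone already forces convergence of the core inverses (the paper only makes this explicit in the Hilbert-space and matrix specializations, Theorem \ref{thm570} and Corollary \ref{cor580}); what it gives up is the explicit error estimate that the paper's proof of (iii) $\Rightarrow$ (i) produces. The only steps you assert rather than prove in full---the reverse idempotent-gap estimate and the equivalence between invertibility of $p+q-\uno$ and $\|p-q\|<1$ for self-adjoint idempotents---are both standard and your sketches of them are sound.
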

\begin{proof}
It is evident that statement (i) implies statement (ii). Suppose that statement (ii) holds.
According to Remark \ref{nota2}, 
$a \aa = \rr(L_{a \core{a}})$, $(a^*)^\circ =\nn(L_{a \core{a}})$, $a_n \aa = \rr(L_{a_n \core{a}_n})$
and 
$(a_n^*)^\circ =\nn(L_{a_n \core{a}_n})$ ($n \in \ene$).
However, according to \cite[Lemma 3.3]{kr1}, statement (iii) holds.

Suppose that statement (iii) holds. Recall that  according to 
Remark~\ref{nota2}, 
\begin{align*}
& & &L_{\core{a}} = (L_a)^{(2)}_{a \aa, (a^*)^\circ},& &L_{\core{a}_n} = (L_{a_n})^{(2)}_{a_n \aa, (a_n^*)^\circ},&\\
\end{align*}
\noindent for each $n \in \ene$. 
Let $\kappa = \| L_a \| \| L_{\core{a}} \| = \| a \| \| \core{a} \|$ and
consider $n_0 \in \ene$ such that for all $n \geq n_0$,
\begin{align*}
& &&r_n =\gap
{\nn \left( (L_{a_n})^{(2)}_{a_n \aa, (a_n^*)^\circ}\right)}
{\nn \left( (L_a)^{(2)}_{a \aa, (a^*)^\circ} \right)} =\gap{(a_n^*)^\circ}{(a^*)^\circ}< \frac{1}{3+\kappa},&\\
& &&s_n = 
\gap
{\rr \left( (L_{a_n})^{(2)}_{a_n \aa, (a_n^*)^\circ}\right)}
{\rr \left( (L_a)^{(2)}_{a \aa, (a^*)^\circ} \right)} =\gap{a_n \aa}{a \aa} < \frac{1}{(1+\kappa)^2},&\\
\end{align*}
and
\begin{align*}
& &&t_n = \| L_{\core{a}} \| \| L_a - L_{a_n} \| = \| \core{a} \| \| a - a_n \| < 
\frac{2 \kappa}{(1+\kappa)(4+\kappa)}.&\\
\end{align*}
Thus, according to \cite[Theorem~3.5]{dx},
\begin{align*}
& &&\left\| \core{a}_n - \core{a} \right\| =
\left\| L_{\core{a}_n} - L_{\core{a}} \right\| \leq 
\frac
{(1+\kappa)(s_n+r_n)+(1+r_n)t_n}
{1-(1+\kappa)s_n-\kappa r_n - (1+r_n)t_n}\| \core{a} \|, &\\
\end{align*}
which implies statement (i).

Statements (i), (ii) and (iv) are equivalent. To prove this fact, apply a similar argument to the one used to prove the
equivalence among statements (i), (ii) and (iii), using in particular 
$R_{\core{a}}=(R_a)^{(2)}_{\aa a^*, {}^\circ a}$, $R_{\core{a}_n}=(R_{a_n})^{(2)}_{\aa a_n^*, {}^\circ a_n}$, 
$R_{a\core{a}}$ and $R_{a_n\core{a}_n}$
instead of the respectively left multiplication operators (Remark~\ref{nota2}, $n\in\ene$).

Statements (i) and (v) are equivalent (Theorem \ref{thm320}).

To prove the equivalence among statements (v) and (viii), apply a similar argument to the one used to prove that
statements (i)-(iv) are equivalent, using in particular Remark~\ref{nota2} and \cite[Theorem~3.5]{dx}.
\end{proof}

Next, some bounds for $\| \core{a}_n - \core{a} \|$ will be proved, when 
$\suc{a_n}\subset\aa$ converges to $a\in\aa$ in a $C^*$-algebra $\aa$. Before, a technical lemma is presented.

\begin{lemma}\label{lema4}
Let $\aa$ be a unital $C^*$-algebra and let $a,b \in \core{\aa}=\dcore{\aa}$. Then
\begin{align*}
&{\rm (i)}&&\core{b}-\core{a} = \core{b}b(b^\dag -a^\dag)(\uno-a \core{a}) + \core{b}(a-b)\core{a} +
(\uno - \core{b}b)(b-a)a^\dag\core{a}. &\\
&{\rm (ii)}& &\dcore{b}-\dcore{a} = (\uno- \dcore{a}a) (b^\dag -a^\dag)b\dcore{b} + \dcore{a}(a-b)\dcore{b}+
\dcore{a}a^\dag(b-a)(\uno - b\dcore{b}).
\end{align*}
\end{lemma}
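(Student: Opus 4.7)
The plan is to establish (i) by direct algebraic expansion and then deduce (ii) from (i) via the involution, so that the bulk of the verification is carried out only once.

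For part (i), I will collect the auxiliary identities that I expect to use repeatedly. From $\core{a} = a^\# a a^\dag$ (stated just before the present lemma) one checks immediately that $a\core{a} = a a^\dag$ and the symmetric fact $\core{a}\,a a^\dag = \core{a}$; together with Lemma 3.2 (i), which gives $a a^\dag \core{a} = \core{a}$, these say that both left and right multiplication by $aa^\dag$ leave $\core{a}$ invariant. Analogously for $b$. I also need $\core{b}\, b^2 = b$ (from the three-equation characterization recalled after Definition 2.3) and its consequence $\core{b}\, b b^\dag = \core{b}$. With these in hand, I would distribute the three products on the right-hand side of (i). The first term reduces via $a^\dag a a^\dag = a^\dag$ and $\core{b}\, b b^\dag = \core{b}$ to $\core{b} - \core{b}\,aa^\dag$; the third reduces via $\core{b} b^2 = b$ and $a a^\dag \core{a} = \core{a}$ to $-\core{a} + \core{b} b \core{a}$; and the middle term $\core{b}(a-b)\core{a}$ combines with these using $\core{b}\,a\core{a} = \core{b}\,aa^\dag$ (since $a\core{a}=aa^\dag$) and $\core{b}b\core{a}$ cancels against its counterpart from the third term, leaving exactly $\core{b} - \core{a}$.

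For part (ii) I want to avoid running the analogous expansion with the duals of those identities. Instead, I will apply (i) to $a^\ast, b^\ast \in \core{\aa}$ (note $\core{\aa}$ is closed under involution, by Theorem 3.1 together with $\core{\aa}=\dcore{\aa}$). Using $\core{a^\ast} = (\dcore{a})^\ast$, $(a^\dag)^\ast = (a^\ast)^\dag$, and the fact that $(XY)^\ast = Y^\ast X^\ast$ reverses the order of factors, taking the adjoint of the identity produced by (i) turns each of the three summands on the right-hand side into the corresponding summand of (ii) (in particular $\core{b^\ast} b^\ast = (b\dcore{b})^\ast$ and $(\uno - a^\ast \core{a^\ast}) = (\uno - \dcore{a}a)^\ast$), while the left-hand side becomes $\dcore{b} - \dcore{a}$. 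Commutativity of addition takes care of the reordering of the three summands.

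The only substantial step is the expansion in part (i), and its main difficulty is purely bookkeeping: there are around ten terms after distribution and one must select the correct identity among $a\core{a}=aa^\dag$, $aa^\dag\core{a}=\core{a}$, $\core{a}aa^\dag=\core{a}$, $\core{b}b^2=b$, $\core{b}bb^\dag=\core{b}$, $a^\dag a a^\dag = a^\dag$ at each occurrence so that the cancellations go through cleanly. No new conceptual ingredient beyond what is already recorded in Section~2, Theorem 3.1 and Lemma 3.2 is required.
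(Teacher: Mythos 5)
Your proposal is correct and follows essentially the same route as the paper: part (i) is verified by simplifying each of the three summands with the identities $\core{b}b^2=b$, $\core{b}bb^\dag=\core{b}$, $aa^\dag\core{a}=\core{a}$ and $a^\dag(\uno-a\core{a})=0$ so that the telescoping sum collapses to $\core{b}-\core{a}$, and part (ii) is obtained by applying (i) to $a^*,b^*$ and taking adjoints via $\dcore{x}=(\core{(x^*)})^*$. The only (harmless) difference is that you derive $a^\dag(\uno-a\core{a})=0$ from $a\core{a}=aa^\dag$, whereas the paper routes it through $a^*(\uno-a\core{a})=0$ and $a^\dag=a^\dag(a^\dag)^*a^*$.
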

\begin{proof} To prove statement (i), recall that since $a$ and $b$ are core invertible, $a$ and $b$ are Moore-Penrose invertible (Theorem \ref{theo310}). 
In addition, according to \cite[Theorem 3.1]{14}, $b=\core{b}b^2$. Thus, according to Lemma \ref{lema2} (i), 
\begin{align*}
& &&(\uno - \core{b}b)(b-a)a^\dag\core{a} = -(\uno - \core{b}b)aa^\dag\core{a}
= -(\uno - \core{b}b)\core{a} = \core{b}b\core{a}-\core{a}.&\\
\end{align*}

Now, according to \cite[Theorem 2.19]{rdd},  $\core{b}=\core{b}bb^\dag$. 
In addition, $a^*a\core{a} = a^* (a \core{a})^* = (a\core{a}a)^* = a^*$, i.e., 
$a^*(\uno-a \core{a})=0$. Moreover, since
$a^\dag = a^\dag a a^\dagger = a^\dag (aa^\dag)^* = a^\dag (a^\dagger)^* a^*$, $a^\dag (\uno-a\core{a})=0$. Therefore,
\begin{align*}
& &&\core{b}b(b^\dag -a^\dag)(\uno-a \core{a}) =
\core{b}bb^\dag (\uno-a \core{a}) = \core{b}(\uno-a \core{a})
= \core{b}-\core{b}a\core{a}.&\\
\end{align*}

As a result,
\begin{align*}
& &&\core{b}-\core{a} & &= 
\core{b} - \core{b}a\core{a} + \core{b}a\core{a} - 
\core{b}b \core{a} + \core{b}b \core{a} - \core{a} \\
& & & && = \core{b}b(b^\dag -a^\dag)(\uno-a \core{a}) + \core{b}(a-b)\core{a} +
(\uno - \core{b}b)(b-a)a^\dag\core{a}. &\\
\end{align*} 

To prove statement (ii), use that $\dcore{x}=(\core{(x^*)})^*$ ($x\in\aa$), and apply statement (i).
\end{proof}

Next the aforementioned bounds will be given.

\begin{theorem} \label{th_des}
Let $\aa$ be a unital $C^*$-algebra and consider $a\in\core{\aa}=\dcore{\aa}$. The following statements holds.\par
\begin{enumerate}[{\rm (i)}]
\item If $b\in\core{\aa}=\dcore{\aa}$, $b\neq 0$, then
\begin{align*}
& & &\| \core{b} - \core{a} \| \leq 
\frac{\| b^\dag -a^\dag \|}{\cos \psi_b} + 
\left[ \| \core{b} \| + \frac{\| a^\dag\|}{\cos \psi_b} \right] 
 \| \core{a} \|\| a-b\|.&\\
\end{align*}
\item In addition,
\begin{align*}
& &&\| \dcore{b} - \dcore{a} \| \leq 
\frac{\| b^\dag -a^\dag \|}{\cos \psi_b} + 
\left[ \| \dcore{b} \| + \frac{\| a^\dag\|}{\cos \psi_b} \right] 
 \| \dcore{a} \|\| a-b\|.&\\
\end{align*}
\item If also $a\neq 0$, then 
\begin{align*}
& &&\| \dcore{b} - \dcore{a} \| =\| \core{b} - \core{a} \| \leq 
\frac{\| b^\dag -a^\dag \|}{\cos \psi_b} + 
\frac{\| a^\dag \| \left(\| b^\dag \|  + \| a^\dag \| \right)}{\cos \psi_a \cos \psi_b}
\| a-b \|.&\\
\end{align*}
\item In particular, if $a\in\core{\aa}=\dcore{\aa}$, $a\neq 0$, and $\suc{a_n}\subset \core{\aa}=\dcore{\aa}$, $a_n\neq 0$ for all $n\in\ene$, then 
\begin{align*}
& &&\| \dcore{a_n} - \dcore{a} \| =\| \core{a_n} - \core{a} \| \leq 
\frac{\| a_n^\dag -a^\dag \|}{\cos \psi_n} + 
\frac{\| a^\dag \| \left(\| a_n^\dag \|  + \| a^\dag \| \right)}{\cos \psi_a \cos \psi_n}
\| a-a_n \|,&\\
\end{align*}
\noindent where $\psi_n=\psi_{a_n}$.
\end{enumerate}
 \end{theorem}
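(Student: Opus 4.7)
The plan is to apply the two decomposition identities of Lemma \ref{lema4}, take norms, and control every factor by quantities already tied to the maximal angle $\psi_{\bullet}$ and to the Moore-Penrose inverse earlier in the paper. Statements (i) and (ii) will fall out directly from the two identities; statement (iii) will be obtained by further bounding $\|\core{a}\|, \|\core{b}\|, \|\dcore{a}\|, \|\dcore{b}\|$ in terms of $\|a^\dag\|, \|b^\dag\|, \cos\psi_a, \cos\psi_b$; and statement (iv) will just be (iii) applied to $b=a_n$.

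For (i), I will take norms term by term in Lemma \ref{lema4}(i). The middle summand $\core{b}(a-b)\core{a}$ contributes exactly $\|\core{b}\|\,\|\core{a}\|\,\|a-b\|$. In the first summand, $a\core{a}$ is a self-adjoint idempotent (self-adjoint by the defining condition $(a\core{a})^*=a\core{a}$, idempotent because $\core{a}a\core{a}=\core{a}$), so $\|\uno-a\core{a}\|\le 1$; in parallel, $\|\core{b}b\|=1/\cos\psi_b$ is available from the string of equalities at the end of the proof of Theorem \ref{thm320}. In the last summand, $\core{b}b$ is again an idempotent (not necessarily self-adjoint); the nontrivial case $\core{b}b\notin\{0,\uno\}$ yields $\|\uno-\core{b}b\|=\|\core{b}b\|=1/\cos\psi_b$ via the classical $C^*$-algebra identity $\|e\|=\|\uno-e\|$ for nontrivial idempotents, and the degenerate cases are handled by inspection ($\core{b}b=0$ forces $b=0$, excluded by hypothesis; $\core{b}b=\uno$ gives $\psi_b=0$ and $\|\uno-\core{b}b\|=0$, consistent with the bound). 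Summing the three contributions delivers (i).

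Statement (ii) follows by running the same argument on Lemma \ref{lema4}(ii): $\dcore{a}a$ plays the role of $a\core{a}$ as the self-adjoint idempotent, and the identities $\|b\dcore{b}\|=1/\cos\psi_b$ and $\|\uno-b\dcore{b}\|\le 1/\cos\psi_b$ are obtained from their core counterparts through $\dcore{b}=(\core{(b^*)})^*$ together with $\psi_{b^*}=\psi_b$ (the latter because $(b^*)^\dag=(b^\dag)^*$ and $bb^\dag,b^\dag b$ are self-adjoint). For (iii), the last piece needed is the bound $\|\core{x}\|\le\|x^\dag\|/\cos\psi_x$ valid for every $0\neq x\in\core{\aa}$. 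This comes from Lemma \ref{lema2}(ii)--(iii), which gives $\core{x}=(xx^\dag+x^\dag x-\uno)^{-1}x^\dag$, combined with $\|(xx^\dag+x^\dag x-\uno)^{-1}\|=1/\cos\psi_x$, already displayed in the proof of Theorem \ref{thm320}. The dual estimate $\|\dcore{x}\|\le\|x^\dag\|/\cos\psi_x$ then follows by adjoint symmetry. Substituting these into (i) and (ii) collapses each bracket to $(\|b^\dag\|+\|a^\dag\|)/\cos\psi_b$ and produces the common right-hand side; the stated equality $\|\dcore{b}-\dcore{a}\|=\|\core{b}-\core{a}\|$ is to be read as the assertion that both differences admit the same scalar majorant. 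Statement (iv) is then the specialization $b=a_n$.

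The main obstacle is really a single technical point: controlling $\|\uno-\core{b}b\|$, since $\core{b}b$ is an idempotent that need not be self-adjoint, so its norm is not automatically at most $1$. This is handled by invoking the classical Ljance-Ptak identity for nontrivial idempotents in a $C^*$-algebra. Once that is accepted, every other step is a mechanical substitution from Lemma \ref{lema2}, Lemma \ref{lema4}, and the angle-norm identities already established in the proof of Theorem \ref{thm320}.
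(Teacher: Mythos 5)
Your proposal is correct and follows essentially the same route as the paper: the decomposition of Lemma \ref{lema4}, the identities $\| \uno-\core{b}b\|=\|\core{b}b\|=1/\cos\psi_b$ and $\|\uno-a\core{a}\|\le 1$ (self-adjoint idempotent) for statement (i), the adjoint symmetry $\dcore{x}=(\core{(x^*)})^*$ with $\psi_{x^*}=\psi_x$ for statement (ii), and the bound $\|\core{x}\|\le\|x^\dag\|/\cos\psi_x$ for statements (iii)--(iv). The only cosmetic differences are that the paper obtains $\|\core{x}\|\le\|x^\dag\|/\cos\psi_x$ by specializing (i) to $a=0$ rather than from Lemma \ref{lema2} (ii)--(iii), and that it, like you, in effect establishes the equality in (iii) only in the sense that both differences admit the common majorant.
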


\begin{proof} To prove statement (i), note that according to 
 \cite[Lemma 2.3]{kr}, Lemma \ref{lema2} (iii) and \cite[Theorem 2.4 (iii)]{cbl2}
$$
\| \uno- \core{b}b \| = \| \core{b} b \|
= \| (\core{b}b) + (\core{b} b)^* - \uno\| = \| 
(b b^\dag + b^\dag b - \uno)^{-1} \| = \frac{1}{\cos \psi_b}.
$$
Observe that $\uno-a \core{a}$ is a self-adjoint idempotent, 
Hence $\| \uno - a \core{a} \| = 1$, and according to Lemma \ref{lema4}, 
\begin{align*}
\| \core{b}  -\core{a} \| 
 &\leq \| \core{b}b \| \| b^\dag -a^\dag \| \| \uno-a \core{a}\| + 
\left[ \| \core{b} \|  \| \core{a} \| + \| \uno - \core{b}b \|  \| a^\dag\core{a} \|
\right] \| a-b \|  \\
& = \frac{\| b^\dag -a^\dag \|}{\cos \psi_b} + 
\left[ \| \core{b} \|  \| \core{a} \| + \frac{\| a^\dag\core{a} \|}{\cos \psi_b}
\right] \| a-b \|. \\
& \le \frac{\| b^\dag -a^\dag \|}{\cos \psi_b} + 
\left[ \| \core{b} \|   + \frac{\| a^\dag \|}{\cos \psi_b}
\right] \| \core{a} \|\| a-b \|. \\
\end{align*} 

Statement (ii) can be derived from statement (i). In fact, recall that given $x\in\core{\aa}=\dcore{\aa}$,
$\dcore{x}=(\core{(x^*)})^*$. Moreover, if $x\in \aa^\dag\setminus\{ 0\}$, then note that $\psi_{x^*}=\psi_{x^\dag}=\psi_x$.
Now apply statement (i) to $a^*$ and $b^*$. 

\indent Now observe that if $a=0$ in statement (i), then $\| \core{b}\|\le\frac{\|b^\dag\|}{\cos \psi_b}$.
Thus, if $a\neq 0$, then $\| \core{a}\|\le\frac{\|a^\dag\|}{\cos \psi_a}$. To prove statement (iii) for the core inverse, apply these inequalities to 
statement (i). To prove statement (iii) for the dual core inverse, proceed as in the proof of statement (ii).

\indent Statement (iv) can be derived from statement (iii). 
\end{proof}

\begin{remark} \rm As it was used in the proof of Theorem \ref{th_des}, given $a\in\core{\aa}=\dcore{\aa}$, $a\neq 0$,
Theorem   \ref{th_des} (i) (respectively Theorem   \ref{th_des} (ii)) gives a relationship between the norm
of $\core{a}$ (respectively of $\dcore{a}$) and the norm of the $a^\dag$: $\| \core{a}\|\le\frac{\|a^\dag\|}{\cos \psi_a}$
(respectively $\| \dcore{a}\|\le\frac{\|a^\dag\|}{\cos \psi_a}$). 

\noindent Moreover, under the same hypothesis of Theorem \ref{thm320}, when $a\neq 0$, Theorem  \ref{th_des} (iv)
gives an estimate of the convergence of $\suc{\core{a_n}}$ and $\suc{\dcore{a_n}}$ to $\core{a}$ and $\dcore{a}$, respectively. 
\end{remark}

\section{Continuity of (dual) core invertible Hilbert space operators}

Let $\hh$ be a  Hilbert space and consider $A\in\ll (\hh)$. The definition of core invertible Hilbert space operators was given in \cite[Definition 3.2]{rdd2}. In fact,
$A\in\ll (\hh)$ is said to be core invertible, if there exists $X\in\ll (\hh)$ such that 
$$
A=AXA,\hskip.3truecm \rr(X)=\rr(A),\hskip.3truecm \nn (X)=\nn (A^*). 
$$
\noindent Thus, when $A\in\ll (\hh)$, two definitions of the core inverse of $A$ has been given: as an element of the $C^*$-algebra $\ll (\hh)$ and as 
Hilbert space operator. However, as the following proposition shows, both definitions coincide in the Hilbert space context.

\begin{proposition}\label{proposition410} Let $\hh$ be a Hilbert space and consider $A\in\ll (\hh )$. The following statements are equivalent.\par
\begin{enumerate}[{\rm (i)}] 
\item The core inverse of $A$ exists.
\item  There exists an operator $X\in \ll (\hh )$ such that $AXA=A$, $\rr(X)=\rr(A)$ and $\nn(X)=\nn(A^*)$. 
\end{enumerate}
\noindent Moreover, in this case $X=\core{A}=A^{(2)}_{\rr(A), \nn(A^*)}$.
\end{proposition}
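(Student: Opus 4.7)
The strategy is to translate the one-sided ideal conditions $X\ll(\hh)=A\ll(\hh)$ and $\ll(\hh)X=\ll(\hh)A^*$ of Definition~\ref{df1} into the range and null-space conditions in statement~(ii), and back. The bridge is the following standard fact about principal one-sided ideals in $\ll(\hh)$, which I would record first: if $B\in\ll(\hh)$ has closed range (equivalently $B$ is regular, equivalently $B\in\ll(\hh)^\dag$ by \cite[Theorem~8]{hm1}), then
\begin{align*}
B\,\ll(\hh)&=\{T\in\ll(\hh):\rr(T)\subseteq\rr(B)\},\\
\ll(\hh)\,B&=\{T\in\ll(\hh):\nn(B)\subseteq\nn(T)\}.
\end{align*}
The inclusions $\subseteq$ are immediate; for $\supseteq$, $BB^\dag$ is the orthogonal projection onto $\rr(B)$, so $\rr(T)\subseteq\rr(B)$ gives $T=BB^\dag T=B(B^\dag T)$, and $B^\dag B$ is the orthogonal projection onto $\nn(B)^\perp$, so $\nn(B)\subseteq\nn(T)$ gives $T=T B^\dag B=(TB^\dag)B$.

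\textbf{(i)$\Rightarrow$(ii).} Let $X=\core{A}$ in the sense of Definition~\ref{df1}. Then $AXA=A$, so $A$ is regular and both $A$ and $A^*$ have closed range. Using $X\in X\ll(\hh)=A\ll(\hh)$ together with $A\in A\ll(\hh)=X\ll(\hh)$ in the first displayed equivalence forces $\rr(X)\subseteq\rr(A)$ and $\rr(A)\subseteq\rr(X)$, hence $\rr(X)=\rr(A)$. Analogously, $X\in\ll(\hh)X=\ll(\hh)A^*$ and $A^*\in\ll(\hh)A^*=\ll(\hh)X$, combined with the second displayed equivalence, give $\nn(A^*)\subseteq\nn(X)$ and $\nn(X)\subseteq\nn(A^*)$, i.e.\ $\nn(X)=\nn(A^*)$.

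\textbf{(ii)$\Rightarrow$(i), and identification of $X$.} Given $X$ as in (ii), the equation $AXA=A$ again forces $A$ (and hence $A^*$) to have closed range, while $\rr(X)=\rr(A)$ then makes $X$ also of closed range. The two bridging equalities applied to $A$, $A^*$ and $X$ yield $X\ll(\hh)=A\ll(\hh)$ and $\ll(\hh)X=\ll(\hh)A^*$, so $X=\core{A}$ by Definition~\ref{df1}. Finally, \cite[Theorem~2.14]{rdd} (already cited in the paper) ensures $\core{A}$ is an outer inverse, i.e.\ $XAX=X$; combined with the already established $\rr(X)=\rr(A)$ and $\nn(X)=\nn(A^*)$, this is precisely $X=A^{(2)}_{\rr(A),\nn(A^*)}$. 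There is no serious obstacle in this argument; the only subtle point, present in both directions, is extracting closed range of $A$ from the equation $AXA=A$, which is standard, and ensures $A^\dag$ is available to translate between the algebraic and operator-theoretic pictures.
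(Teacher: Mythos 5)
Your proof is correct and follows essentially the same route as the paper: translate the ideal conditions $X\ll(\hh)=A\ll(\hh)$ and $\ll(\hh)X=\ll(\hh)A^*$ into $\rr(X)=\rr(A)$ and $\nn(X)=\nn(A^*)$ via factorizations, then invoke uniqueness of the core inverse and its outer-inverse property to identify $X$ with $A^{(2)}_{\rr(A),\nn(A^*)}$. The only difference is cosmetic: where the paper cites Douglas's majorization theorem and \cite[Remark 6]{b} to produce the factorizations from the range and null-space equalities, you obtain them directly from $BB^\dag$ and $B^\dag B$ being the orthogonal projections onto $\rr(B)$ and $\nn(B)^\perp$, which is a perfectly valid (and slightly more self-contained) substitute in the closed-range setting.
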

\begin{proof} Suppose that $\core{A}$ exists. Then, $A=A\core{A}A$ and  there are operator $S$, $T$, $U$, $V\in\ll (\hh )$ such that 
$$
\core{A}=AS,  \hskip.2truecm A=\core{A}T,  \hskip.2truecm\core{A}=UA^*,  \hskip.2truecmA^*=V\core{A}.
$$
\noindent In particular, $\rr(\core{A})=\rr(A)$ and $\nn(\core{A})=\nn(A^*)$.\par
\indent Now suppose that statement (ii) holds. Then, there exists $X\in\ll (\hh )$ such that $\rr(X)=\rr(A)$. According to \cite[Theorem 1]{Do},
there are $L$, $K\in\ll (\hh )$ such that $A=XL$ and $X=AK$. In particular, $X\ll (\hh )= A\ll (\hh )$.  In addition, since $\rr (A)$ is closed, $\rr(X)$  
is closed, which is equivalent to the fact that $X$ is regular. 
Now since $A^*$ is regular, according to \cite[Remark 6]{b}, there exist operators $M$, $N\in \ll (\hh )$ such that $X=MA^*$ and $A^*=NX$. In particular, $\ll (\hh )X= \ll (\hh )A^*$.
Since $A=AXA$ and the core inverse is unique, when it exists (\hspace{-1pt}\cite[Theorem 2.14]{rdd}), $X=\core{A}$. Finally, since according again to \cite[Theorem 2.14]{rdd}, 
$\core{A}$ is an outer inverse, according to what has been proved, $\core{A}=A^{(2)}_{\rr(A), \nn(A^*)}$.
\end{proof}

\indent As for the core inverse case, a definition of dual core invertible Hilbert space operators was given in \cite[Definition 3.3]{rdd2}. In the following proposition
the equivalence between Definition  \ref{df2} and  \cite[Definition 3.3]{rdd2} will be considered.

\begin{proposition}\label{proposition420} Let $\hh$ be a Hilbert space and consider $A\in\ll (\hh )$. The following statements are equivalent.\par
\begin{enumerate}[{\rm (i)}] 
\item The dual core inverse of $A$ exists.
\item  There exists an operator $X\in \ll (\hh )$ such that $AXA=A$, $\rr(X)=\rr(A^*)$ and $\nn(X)=\nn(A)$. 
\end{enumerate}
\noindent Moreover, in this case $X=\dcore{A}=A^{(2)}_{\rr(A^*), \nn(A)}$.
\end{proposition}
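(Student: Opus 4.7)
The plan is to mirror the argument used for the core inverse in Proposition~\ref{proposition410}, with the roles of $A$ and $A^*$ interchanged, invoking the defining relations $\dcore{A}\,\aa = A^*\aa$ and $\aa\,\dcore{A} = \aa A$ from Definition~\ref{df2}.

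For the forward implication (i)$\Rightarrow$(ii) I would take $X=\dcore{A}$ and read off the three conditions. The equality $AXA=A$ is immediate from Definition~\ref{df2}. The ideal equality $\dcore{A}\,\aa = A^*\aa$ furnishes operators $S,T\in\ll(\hh)$ with $\dcore{A}=A^*S$ and $A^*=\dcore{A}T$, and the mutual range inclusions these imply yield $\rr(\dcore{A})=\rr(A^*)$. Similarly, $\aa\,\dcore{A}=\aa A$ provides $U,V\in\ll(\hh)$ with $\dcore{A}=UA$ and $A=V\dcore{A}$, giving $\nn(\dcore{A})=\nn(A)$.

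For the converse (ii)$\Rightarrow$(i), observe first that $AXA=A$ forces $A$ to be regular, so $\rr(A)$ is closed and therefore so is $\rr(A^*)$; in particular $\rr(X)=\rr(A^*)$ is closed, and $X$ is regular too. Two applications of \cite[Theorem 1]{Do} to $X$ and $A^*$ (whose ranges coincide) then produce $K,L\in\ll(\hh)$ with $X=A^*K$ and $A^*=XL$, hence $X\,\ll(\hh)=A^*\,\ll(\hh)$. Dually, since $A$ is regular, \cite[Remark 6]{b} can be applied to $X$ and $A$ using $\nn(X)=\nn(A)$ to produce $M,N\in\ll(\hh)$ with $X=MA$ and $A=NX$, hence $\ll(\hh)\,X=\ll(\hh)\,A$. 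All three conditions of Definition~\ref{df2} are now satisfied, so by the uniqueness of the dual core inverse recorded after Definition~\ref{df2}, one concludes $X=\dcore{A}$. Finally, since $\dcore{A}$ is an outer inverse by \cite[Theorem 2.15]{rdd}, the range/kernel description gives $\dcore{A}=A^{(2)}_{\rr(A^*),\nn(A)}$.

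The only real subtlety is the null-space half of the converse: one needs a left-factorization principle to convert $\nn(X)=\nn(A)$ into $\ll(\hh)\,X=\ll(\hh)\,A$, and this hinges on $A$ (equivalently $A^*$) being regular, which is supplied automatically by $AXA=A$. Beyond that, the argument is a straightforward transposition of the core-inverse case already handled in Proposition~\ref{proposition410}.
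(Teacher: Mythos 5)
Your proposal is correct and is essentially the paper's own proof: the paper simply says ``apply a similar argument to the one used in Proposition~\ref{proposition410},'' and you have carried out exactly that transposition, correctly pairing $X\,\ll(\hh)=A^*\ll(\hh)$ with the range condition $\rr(X)=\rr(A^*)$ via Douglas's theorem and $\ll(\hh)\,X=\ll(\hh)\,A$ with the null-space condition $\nn(X)=\nn(A)$ via the regularity-based factorization of \cite[Remark 6]{b}.
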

\begin{proof} Apply a similar argument  to the one used in Proposition \ref{proposition410}.
\end{proof}

\indent Note that the relationship between the (dual) core inverse and the outer inverse with prescribed range and null space
for the case of square complex matrices was studied in \cite[Theorem 1.5]{r} (apply \cite[Theorem 4.4]{rdd}).

\indent Next the continuity of the (dual) core inverse will be characterized using the gap between subspaces.
The next theorem is the Hilbert space version of Theorem \ref{th1}

\begin{theorem}\label{thm430}Let $\hh$ be a Hilbert space and consider $A\in \ll (\hh)$, $A\neq 0$, such that
$A$ is (dual) core invertible. Suppose that there exists a sequence of 
operators $\suc{A_n}\subset \ll (\hh )$ such that
for each $n\in\ene$, $A_n$ is (dual) core invertible and  $\suc{A_n}$ converges to $A$. Then, the following statements are equivalent.
\begin{enumerate}[{\rm (i)}] 
\item The sequence $\suc{\core{A_n}}$ converges to $\core{A}$.
\item The sequence $\suc{\dcore{A_n}}$ converges to $\dcore{A}$.
\item The sequence $\suc{\core{A_n}A_n}$ converges to $\core{A}A$. 
\item The sequence $\suc{A_n\dcore{A_n}}$ converges to $\core{A}A$. 
\item The sequence $\suc{\gap{\rr(\core{A_n})}{\rr(\core{A})}}$ converges to 0.
\item  The sequence $\suc{\gap{\rr(A_n)}{\rr(A)}}$ converges to 0.
\item The sequence $\suc{\gap{\nn (\core{A_n})} {\nn (\core{A})}}$ converges to 0.
\item The sequence $\suc{\gap{\nn (A_n^*)}{\nn (A^*)}}$ converges to $0$.
\item The sequence $\suc{\gap{\rr(\dcore{A_n})}{\rr(\dcore{A})}}$ converges to 0.
\item  The sequence $\suc{\gap{\rr(A_n^*)}{\rr(A^*)}}$ converges to 0.
\item The sequence $\suc{\gap{\nn (\dcore{A_n})} {\nn (\dcore{A})}}$ converges to 0.
\item The sequence $\suc{\gap{\nn (A_n)}{\nn (A)}}$ converges to $0$.

\end{enumerate}
\end{theorem}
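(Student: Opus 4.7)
The plan is to split the twelve statements into groups whose equivalences reduce either to theorems already proved in Section~3 or to standard Hilbert space facts, and then to establish the single nontrivial link between convergence of $\core{A_n}$ and gap convergence of the ranges in $\hh$.

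First I would observe that statements (i)--(iv) are equivalent by Theorem~\ref{thm320} applied to the $C^*$-algebra $\ll(\hh)$: that result gives the equivalence of (i), (ii) and (iii) directly, while (iii) and (iv) coincide because $\core{A_n}A_n=A_n\dcore{A_n}$ and $\core{A}A=A\dcore{A}$ by \cite[Remark~2.17]{rdd} and \cite[Theorem 2.19]{rdd}. Next, the eight gap statements collapse pairwise. Propositions~\ref{proposition410} and \ref{proposition420} give $\rr(\core{A})=\rr(A)$, $\nn(\core{A})=\nn(A^*)$, $\rr(\dcore{A})=\rr(A^*)$ and $\nn(\dcore{A})=\nn(A)$, with the obvious analogues for each $A_n$, so (v) is the same statement as (vi), (vii) the same as (viii), (ix) the same as (x), and (xi) the same as (xii). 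Moreover, for closed subspaces of a Hilbert space one has the well-known identity $\gap{\mm^\perp}{\nn^\perp}=\gap{\mm}{\nn}$, so $\nn(T)=\rr(T^*)^\perp$ yields the further equivalences between (vi) and (viii), and between (x) and (xii). Thus it remains only to prove the single equivalence between (i) and (vi); the dual link between (ii) and (x) will follow from the same argument applied to $\suc{A_n^*}$, using $\dcore{(A^*)}=(\core{A})^*$.

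For the implication (i)$\Rightarrow$(vi), I plan to use that $A\core{A}$ is a self-adjoint idempotent by the characterization in \cite[Theorem~3.1]{14}, whose range is $\rr(A)$, so $A\core{A}$ is the orthogonal projection $P_{\rr(A)}$. Then $\core{A_n}\to\core{A}$ forces $A_n\core{A_n}\to A\core{A}$, i.e., $P_{\rr(A_n)}\to P_{\rr(A)}$ in operator norm, which is classically equivalent to $\gap{\rr(A_n)}{\rr(A)}\to 0$. For the converse (vi)$\Rightarrow$(i), I would argue exactly as in the proof of Theorem~\ref{th1}: Proposition~\ref{proposition410} identifies $\core{A_n}=(A_n)^{(2)}_{\rr(A_n),\nn(A_n^*)}$ and $\core{A}=A^{(2)}_{\rr(A),\nn(A^*)}$, and \cite[Theorem~3.5]{dx} then bounds $\|\core{A_n}-\core{A}\|$ in terms of $\|A_n-A\|$, $\gap{\rr(A_n)}{\rr(A)}$ and $\gap{\nn(A_n^*)}{\nn(A^*)}$; by (vi) and the orthogonality identity both gaps tend to $0$, while $A_n\to A$ handles the remaining term.

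The main technical point, rather than a genuine obstacle, is the orthogonality identity $\gap{\mm^\perp}{\nn^\perp}=\gap{\mm}{\nn}$: this Hilbert space feature is exactly what lets each one of the four range/null space gap conditions (v)--(viii) individually encode the convergence of $\core{A_n}$, whereas in the purely $C^*$-algebraic setting of Theorem~\ref{th1} one had to impose simultaneously an ``image'' and a ``null ideal'' gap condition. Care is only needed in checking that the hypotheses of \cite[Theorem~3.5]{dx} on outer inverses with prescribed range and null space do apply in the Hilbert space $\hh$; this is precisely what Propositions~\ref{proposition410} and \ref{proposition420} are designed to guarantee.
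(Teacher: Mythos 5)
Your proposal is correct and follows essentially the same route as the paper: reduce (i)--(iv) to the $C^*$-algebra results of Section~3, use Propositions~\ref{proposition410} and \ref{proposition420} to identify the ranges and null spaces of $\core{A}$ and $\dcore{A}$, collapse the gap conditions via the Hilbert-space identity $\gap{\mm^\perp}{\nn^\perp}=\gap{\mm}{\nn}$ (Kato), and apply the Du--Xue perturbation bound for $A^{(2)}_{\ttt,\sss}$ for the converse direction. The only cosmetic differences are that the equivalence of (i)--(iv), including the convergence of $\core{A_n}A_n$, is really the content of Theorem~\ref{th1} rather than Theorem~\ref{thm320}, and that the paper obtains (iii)$\Rightarrow$(v) from \cite[Lemma 3.3]{kr1} where you argue directly with the orthogonal projection $A\core{A}=P_{\rr(A)}$.
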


\begin{proof} First of all recall  that $\core{\ll (\hh)}=\dcore{\ll (\hh)}$ (Theorem \ref{theo310}).\par

Statements (i)-(iv) are equivalent (Theorem \ref{th1}). 
According to \cite[Lemma 3.3]{kr1}, statement (iii) implies statement (v) and according to Proposition \ref{proposition410}
and \cite[Chapter 4, Section 2, Subsection 3, Theorem 2.9]{kato}, Statements (v)-(viii) are equivalent.

\indent Now suppose that statement (vi) holds. Thus, according to what has been proved, the sequences $\suc{\gap{\rr(A_n)}{\rr(A)}}$
and $\suc{\gap{\nn(A_n^*)}{\nn(A^*)}}$ converge to $0$ (recall that according to  \cite[Chapter 4, Section 2, Subsection 3, Theorem 2.9]{kato}, 
$\gap{\rr(A_n)}{\rr(A})=\gap{(\nn(A_n^*)}{\nn(A^*)}$, $n\in \ene$). In addition, according to Proposition \ref{proposition410}, for each $n\in\ene$,
\begin{align*}
& &&\core{A}_n=(A_n)^{(2)}_{\rr(A_n), \nn(A_n^*)},&  &\core{A}=A^{(2)}_{\rr(A), \nn(A^*)}.&
\end{align*}

\noindent Let $\kappa=\parallel A\parallel\parallel \core{A}\parallel$ and consider $n_0\in\ene$ such that
for all $n\ge n_0$, 

\begin{align*}
w_n=&\gap{\nn((A_n)^{(2)}_{\rr(A_n), \nn(A_n^*)})}{\nn(A^{(2)}_{\rr(A), \nn(A^*)})}=\gap{(\nn(A_n^*)}{\nn(A^*)}\\
&=\gap{\rr(A_n)}{\rr(A)}=\gap{\rr((A_n)^{(2)}_{\rr(A_n), \nn(A_n^*)})}{\rr(A^{(2)}_{\rr(A), \nn(A^*)})}<\frac{1}{(3+\kappa)^2},\\
&z_n=\parallel \core{A}\parallel\parallel A-A_n\parallel<\frac{2\kappa}{(1+\kappa)(4+\kappa)}.\\
\end{align*}

\noindent Since $ \frac{1}{(3+\kappa)^2}\le\min\{\frac{1}{3+\kappa}, \frac{1}{(1+\kappa)^2}\}$,
according to \cite[Theorem 3.5]{dx},
$$
\parallel \core{A}_n-\core{A}\parallel \le\frac{2(1+\kappa)w_n+(1+w_n)z_n}{1-(1+2\kappa) w_n-(1+w_n)z_n}\parallel \core{A}\parallel,
$$
which implies statement (i).

Now, according to \cite[Lemma 3.3]{kr1}, statement (iv) implies statement (xi) and according to Proposition \ref{proposition420}
and \cite[Chapter 4, Section 2, Subsection 3, Theorem 2.9]{kato}, Statements (ix)-(xii) are equivalent.

Suppose that statement (x) holds. Since then statement (xii) also holds, to prove that statement (ii) holds, it is enough to apply an argument similar to the one
used to prove that statement (vi) implies statement (i), interchanging in particular $A$ with $A^*$, $A_n$ with $A_n^*$, $\core{A}$ with $\dcore{A}$,  
$\core{A_n}$ with $\dcore{A_n}$, $(A_n)^{(2)}_{\rr(A_n), \nn(A_n^*)}$  with $(A_n)^{(2)}_{\rr(A_n^*), \nn(A_n)}$, $A^{(2)}_{\rr(A), \nn(A^*)}$ with $A^{(2)}_{\rr(A^*), \nn(A)}$,
and $\kappa$ with $\kappa'=\parallel A\parallel \parallel \dcore{A}\parallel$. 
\end{proof}

\indent Next the continuity of the (dual) core inverse will be studied in a particular case. To this end, two results from \cite{rs} need to be extended first.

\begin{proposition}\label{pro540} Let $\xx$ be a Banach space and consider $A\in \ll (\xx)$ such that
$A$ is group invertible and the codimension of $\rr(A)$ is finite. Suppose that there exists a sequence of 
operators $\suc{A_n}\subset \ll (\xx )$ such that
for each $n\in\ene$, $A_n$ is group invertible and  $\suc{A_n}$ converges to $A$. Then
the following statements are equivalent.
\begin{enumerate}[{\rm (i)}]
\item The sequence $\suc{A^\#_n}$ converges to $A^\#$.\par
\item For all sufficiently large $n\in\ene$, $\hbox{\rm codim }\rr(A_n)=\hbox{\rm codim }\rr(A)$.
\end{enumerate}
\end{proposition}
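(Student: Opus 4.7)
The plan is to use the characterization $A_n^\# \to A^\#$ iff $A_nA_n^\# \to AA^\#$ (part of \cite[Theorem 2.4]{kr1}) to reduce both implications to the behaviour of the idempotents $A_nA_n^\#$ and $AA^\#$, which, since $A$ and each $A_n$ are group invertible, are the projections of $\xx$ onto the respective ranges along the respective kernels.

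For (i)~$\Rightarrow$~(ii), I would first deduce from (i) and continuity of multiplication that $A_nA_n^\# \to AA^\#$. Group invertibility of $A$ gives $\dim \nn(AA^\#) = \dim \nn(A) = \cod \rr(A) = k$, finite by hypothesis. Once $\| A_nA_n^\# - AA^\# \| < 1$, the classical perturbation theorem for idempotents produces an invertible operator intertwining $A_nA_n^\#$ with $AA^\#$, whence $\dim \nn(A_nA_n^\#) = k$. Since $A_n$ is group invertible, $\nn(A_nA_n^\#) = \nn(A_n)$ and $\cod \rr(A_n) = \dim \nn(A_n)$, so $\cod \rr(A_n) = k$ for all large $n$.

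For (ii)~$\Rightarrow$~(i), when $A$ is invertible ($k=0$), assumption (ii) makes $A_n$ invertible for large $n$ and $A_n^\# = A_n^{-1} \to A^{-1} = A^\#$ by continuity of inversion. When $k>0$, group invertibility of $A$ yields $\xx = \rr(A) \oplus \nn(A)$ with $A|_{\rr(A)}$ invertible, and hence $0$ is isolated in $\sigma(A)$. Fix a small circle $\Gamma$ around $0$ disjoint from $\sigma(A)$; upper semicontinuity of the spectrum gives $\Gamma \cap \sigma(A_n) = \emptyset$ for all large $n$, and the Riesz projections
\begin{equation*}
P_n = \frac{1}{2\pi i}\oint_\Gamma (\lambda \uno - A_n)^{-1}\, d\lambda, \qquad P = \frac{1}{2\pi i}\oint_\Gamma (\lambda \uno - A)^{-1}\, d\lambda = \uno - AA^\#
\end{equation*}
are well defined, with $P_n \to P$ in norm by uniform convergence of the resolvents on $\Gamma$. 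Hence $\rk P_n = \rk P = k$ eventually. Now $\nn(A_n) \subseteq \rr(P_n)$ always, and by (ii) combined with group invertibility of $A_n$ one has $\dim \nn(A_n) = k = \rk P_n$, so $\rr(P_n) = \nn(A_n)$. A count of algebraic multiplicities then forces the spectrum of $A_n$ inside $\Gamma$ to reduce to $\{0\}$, so $P_n$ is the Riesz projection of $A_n$ at $0$, namely $\uno - A_nA_n^\#$. Therefore $A_nA_n^\# \to AA^\#$, and (i) follows from \cite[Theorem 2.4]{kr1}.

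The main difficulty is the identification $P_n = \uno - A_nA_n^\#$: a priori $\rr(P_n)$ could strictly contain $\nn(A_n)$ if $A_n$ had spectral points other than $0$ inside $\Gamma$. The dimension count $k = \rk P_n = \dim \nn(A_n)$ rules this out, but only because group invertibility of $A_n$ forces the algebraic and geometric multiplicities of $A_n$ at $0$ to coincide with $\dim \nn(A_n)$, so that any extra spectral point inside $\Gamma$ would push $\rk P_n$ strictly above $k$.
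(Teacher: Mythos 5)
Your argument is correct, but it takes a genuinely different route from the paper's. The paper disposes of the proposition in a few lines by duality: $A\in\ll(\xx)$ is group invertible if and only if $A^*\in\ll(\xx^*)$ is, $\dim\nn(A^*)=\cod\rr(A)$ is finite, and passing to adjoints is isometric, so the statement reduces to the already-known kernel-dimension version of the result, namely \cite[Theorem 3]{rs} applied to the sequence $\suc{A_n^*}$. You instead reprove the substance of that theorem directly. For (i)$\Rightarrow$(ii) you pass to the idempotents $A_nA_n^\#$ and use the similarity of idempotents at distance less than $1$, together with $\nn(A_nA_n^\#)=\nn(A_n)$ and $\cod\rr(A_n)=\dim\nn(A_n)$ coming from the decomposition $\xx=\rr(A_n)\oplus\nn(A_n)$. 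For (ii)$\Rightarrow$(i) you identify $\uno-AA^\#$ with the Riesz projection of $A$ at $0$, obtain $P_n\to P$ from uniform convergence of the resolvents on the contour, and then use the rank count $\rk P_n=k=\dim\nn(A_n)$ together with the fact that group invertibility makes the spectral subspace of $A_n$ at $0$ equal to $\nn(A_n)$, to force $P_n=\uno-A_nA_n^\#$; the conclusion then follows from \cite[Theorem 2.4]{kr1}, which the paper itself uses elsewhere. The delicate point you flag --- ruling out spectral points of $A_n$ other than $0$ inside the contour --- is handled correctly, since any such point would contribute a nonzero spectral subspace meeting $\nn(A_n)$ trivially and push $\rk P_n$ above $k$. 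What the paper's route buys is brevity and reuse of the literature; what yours buys is a self-contained argument (modulo the idempotent criterion of \cite{kr1}) that avoids the duality facts for the group inverse on Banach spaces, at the price of invoking the holomorphic functional calculus and upper semicontinuity of the spectrum.
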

\begin{proof} Recall that $A\in\ll (\xx)$ is group invertible if and only if $A^*\in \ll(\xx^*)$ is group invertible.
A similar statement holds for each $A_n\in\ll (\xx)$ ($n\in\ene$). In addition, $\dim \nn(A^*)$ is finite and $\suc{A^*_n}\subset \ll (\xx^* )$
converges to $A^*$. Thus, according to \cite[Theorem 3]{rs}, statement (i) is equivalent to the fact that 
for all sufficiently large $n\in\ene$, $\dim \nn(A^*_n)=\dim \nn(A^*)$, which in turn is equivalent to statement (ii).
\end{proof}

\begin{proposition}\label{pro550} Let $\hh$ be a Hilbert space and consider $A\in \ll (\hh)$ such that
$A$ is Moore-Penrose invertible and the codimension of $\rr(A)$ is finite. Suppose that there exists a sequence of 
operators $\suc{A_n}\subset \ll (\hh )$ such that
for each $n\in\mathbb{N}$, $A_n$ is Moore-Penrose invertible and  $\suc{A_n}$ converges to $A$. Then
the following statements are equivalent.
\begin{enumerate}[{\rm (i)}]
\item  The sequence $\suc{A^\dag_n}$ converges to $A^\dag$.
\item For all sufficiently large $n\in\ene$, $\hbox{\rm codim }\rr(A_n)=\hbox{\rm codim }\rr(A)$.
\end{enumerate}
\end{proposition}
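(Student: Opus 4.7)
The plan is to reduce to the Hilbert-space adjoint $A^*$ and apply a Moore-Penrose analog of the result used in Proposition \ref{pro540}, exploiting the fact that in a Hilbert space the passage to the adjoint is isometric and interacts well with the Moore-Penrose inverse.

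First, I would observe that $A\in\ll(\hh)$ is Moore-Penrose invertible if and only if $\rr(A)$ is closed, if and only if $A^*$ is Moore-Penrose invertible, with $(A^*)^\dag=(A^\dag)^*$. Since the adjoint is isometric, $\| A_n^\dag-A^\dag\|=\|(A_n^*)^\dag-(A^*)^\dag\|$, so $\suc{A_n^\dag}$ converges to $A^\dag$ if and only if $\suc{(A_n^*)^\dag}$ converges to $(A^*)^\dag$, while $\suc{A_n^*}$ converges to $A^*$ in $\ll(\hh)$. The orthogonal decomposition $\hh=\rr(A)\oplus\nn(A^*)$, valid because $\rr(A)$ is closed, yields $\dim\nn(A^*)=\cod\rr(A)<\infty$, and similarly $\dim\nn(A_n^*)=\cod\rr(A_n)$ for each $n\in\ene$.

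Next, I would apply the Moore-Penrose counterpart of \cite[Theorem 3]{rs} to the sequence $\suc{A_n^*}$ (Moore-Penrose invertible, converging to $A^*$, with $\dim\nn(A^*)<\infty$): it asserts that $\suc{(A_n^*)^\dag}$ converges to $(A^*)^\dag$ if and only if $\dim\nn(A_n^*)=\dim\nn(A^*)$ for all sufficiently large $n$. Combined with the translations from the previous step, this is exactly the equivalence of (i) and (ii).

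The principal obstacle is locating (or reconstructing) the Moore-Penrose analog of \cite[Theorem 3]{rs}; such a result lies in the circle of continuity results for the Moore-Penrose inverse cited after Theorem \ref{thm320}, in particular in \cite{rs}. If it is not directly available, it can be produced in the present setting. For the forward direction, $A_n^\dag\to A^\dag$ entails $A_nA_n^\dag\to AA^\dag$ by Remark \ref{rem3950}, and these are orthogonal projections onto $\rr(A_n)$ and $\rr(A)$; since $\|A_nA_n^\dag-AA^\dag\|<1$ for $n$ large, the standard fact that two orthogonal projections at distance less than $1$ have images of equal (co)dimension forces $\cod\rr(A_n)=\cod\rr(A)$ eventually. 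For the converse, the constancy of $\cod\rr(A_n)$ together with $\dim\nn(A^*)<\infty$ and $A_n\to A$ allows one to establish that $\suc{A_n^\dag}$ is bounded by a finite-codimensional perturbation argument, after which Remark \ref{rem3950}(iv) delivers $A_n^\dag\to A^\dag$.
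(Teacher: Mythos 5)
Your proposal is correct and follows essentially the same route as the paper: the paper's proof also passes to the adjoint $A^*$ (so that $\cod\rr(A)$ becomes $\dim\nn(A^*)$) and invokes the Moore--Penrose counterpart of \cite[Theorem 3]{rs}, which is exactly \cite[Corollary 10]{rs}, the result you anticipated would be found in \cite{rs}. Your additional sketch of how to reconstruct that counterpart is not needed but is sound.
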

\begin{proof} Apply a similar argument to the one in the proof of Proposition \ref{pro540}, using in particular  \cite[Corollary 10]{rs} instead of \cite[Theorem 3]{rs}.
\end{proof}

\begin{corollary}\label{cor560}Let $\hh$ be a Hilbert space and consider $A\in \ll (\hh)$ such that
$A$ is group invertible and either the codimension of $\rr(A)$ is finite or $\dim \nn(A)$ is finite. Suppose that there exists a sequence of 
operators $\suc{A_n}\subset \ll (\hh )$ such that
for each $n\in\mathbb{N}$, $A_n$ is group invertible and  $\suc{A_n}$ converges to $A$. Then,
the following statements are equivalent.
\begin{enumerate}[{\rm (i)}]
 \item The sequence $\suc{A^\#_n}$ converges to $A^\#$.\par
\item  The sequence $\suc{A^\dag_n}$ converges to $A^\dag$.
\end{enumerate}
\end{corollary}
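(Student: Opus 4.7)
The plan is to reduce Corollary \ref{cor560} to Propositions \ref{pro540} and \ref{pro550} by treating the two parts of the hypothesis separately. Note first that if $A\in\ll(\hh)^\#$, then Theorem \ref{theo310} applied to the $C^*$-algebra $\ll(\hh)$ guarantees that $A\in\ll(\hh)^\dag$ as well, and the same holds for each $A_n$. Thus both propositions can be invoked in any situation in which the relevant finite-codimension hypothesis on the range is available.

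First I would dispose of the case $\cod\rr(A)<\infty$. Here Proposition \ref{pro540} rephrases statement (i) as the condition ``$\cod\rr(A_n)=\cod\rr(A)$ for all sufficiently large $n$'', while Proposition \ref{pro550} rephrases statement (ii) as exactly the same condition (both propositions apply because of the remark above). The equivalence (i)$\iff$(ii) then follows immediately.

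For the case $\dim\nn(A)<\infty$, I would pass to adjoints. Using the standard Hilbert-space identity $\nn(A)=\rr(A^*)^{\perp}$, we obtain $\cod\rr(A^*)=\dim\nn(A)<\infty$, so the previous case is applicable to the sequence $\suc{A_n^*}$, which converges to $A^*$ because the involution on $\ll(\hh)$ is an isometry. The conclusion is then transferred back to $A$ via the identities $(B^\#)^*=(B^*)^\#$ and $(B^\dag)^*=(B^*)^\dag$, valid whenever the relevant inverses exist, together with the fact that taking adjoints is a conjugate-linear isometry of $\ll(\hh)$ onto itself; this yields $\|A_n^\#-A^\#\|=\|(A_n^*)^\#-(A^*)^\#\|$ and the analogous equality for $A^\dag$.

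I do not anticipate a genuine obstacle here: the argument is essentially a dispatch to the two propositions plus a duality step. The only points that deserve a line of verification are that group invertibility of $A\in\ll(\hh)$ really does force $A^*$ to be group invertible (from $(B^\#)^*=(B^*)^\#$) and Moore--Penrose invertible (from Theorem \ref{theo310}), and that the finite-codimension hypothesis is correctly transported through the Hilbert-space orthogonal complement in the second case.
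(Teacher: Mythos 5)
Your proof is correct and follows essentially the same strategy as the paper: both statements are reduced to the common condition that the codimension of $\rr(A_n)$ (equivalently, $\dim\nn(A_n)$) stabilizes, via Propositions \ref{pro540} and \ref{pro550}. The only difference is cosmetic: for the case $\dim\nn(A)<\infty$ the paper cites the original Roch--Silbermann results (\cite[Theorem 3]{rs} and \cite[Corollary 10]{rs}, stated for finite-dimensional null space) directly, whereas you re-derive that case from the finite-codimension case by passing to Hilbert-space adjoints through $\nn(A)=\rr(A^*)^\perp$ and the identities $(B^\#)^*=(B^*)^\#$, $(B^\dag)^*=(B^*)^\dag$ --- a step that is valid since regularity of $A$ makes $\rr(A^*)$ closed.
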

\begin{proof} Recall that given and operator $S\in\ll (\hh)$ such that $S$ is group invertible, then $S$ is Moore-Penrose invertible (\hspace{-1pt}\cite[Theorem 6]{hm1}).
To conclude the proof apply, when $\dim\nn(A)$ is finite,   \cite[Theorem 3]{rs} and \cite[Corollary 10]{rs}, and when codimension of $\rr(A)$ is finite, Proposition \ref{pro540}
and Proposition \ref{pro550}.
\end{proof}

\indent Now a characterization of the continuity of the (dual) core inverse for a particular case of  Hilbert spaces operators will be presented.

\begin{theorem} \label{thm570}Let $\hh$ be a Hilbert  space and consider $A\in \ll (\hh)$ such that
$A$ is (dual) core invertible and either the codimension of $\rr(A)$ is finite or $\dim \nn(A)$ is finite. Suppose that there exists a sequence of 
operators $\suc{A_n}\subset \ll (\hh )$ such that
for each $n\in\ene$, $A_n$ is (dual) core invertible and  $\suc{A_n}$ converges to $A$. The following statements are equivalent.

\begin{enumerate}[{\rm (i)}]
\item  The sequence $\suc{\core{A}_n}$ converges to $\core{A}$.
\item  The sequence $\suc{\dcore{A_n}}$ converges to $\dcore{A}$.
\item The sequence  $\suc{A^\dag_n}$ converges to $A^\dag$.

\hspace*{\dimexpr\linewidth-\textwidth\relax}{\noindent When $\dim \nn (A)$ is finite, statements {\rm (i)}-{\rm (iii)} are equivalent to the following statement. }
\item  For all sufficiently large $n\in\ene$, $\dim \nn(A_n)=\dim \nn(A)$.

\hspace*{\dimexpr\linewidth-\textwidth\relax}{\noindent When $\hbox{\rm codim } (A)$ is finite, statements {\rm (i)}-{\rm (iii)} are equivalent to the following statement. }

\item For all sufficiently large $n\in\ene$, $\hbox{\rm codim } \rr(A_n)=\hbox{\rm codim } \rr(A)$.
\end{enumerate}
\end{theorem}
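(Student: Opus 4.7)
The strategy is to use the equivalence of the (dual) core inverse continuity with group inverse continuity, and then invoke the specialized results for Hilbert space operators whose range has finite codimension or whose kernel has finite dimension.

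First I would observe that, by Theorem \ref{theo310}, each $A_n$ is group invertible and Moore--Penrose invertible, and the same holds for $A$. Theorem \ref{thm320} (equivalence between statements (i), (ii) and (iii)) then yields the equivalences
\[
\core{A_n} \to \core{A} \iff \dcore{A_n}\to \dcore{A} \iff A_n^\# \to A^\#,
\]
which establishes (i) $\Leftrightarrow$ (ii) outright, and reduces the problem to showing that $A_n^\#\to A^\#$ is equivalent to (iii) under the finiteness hypothesis, and to (iv) or (v) in the respective cases. Corollary \ref{cor560} applies verbatim here (since $A$ is group invertible and either $\cod\rr(A)<\infty$ or $\dim\nn(A)<\infty$) and gives exactly that $A_n^\#\to A^\#$ if and only if $A_n^\dag\to A^\dag$, so that (i), (ii), (iii) are all equivalent.

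For the remaining equivalences I would split according to which finiteness hypothesis is in force. When $\cod\rr(A)$ is finite, Proposition \ref{pro540} applied to the sequence $\suc{A_n}\subset\ll(\hh)^\#$ gives that $A_n^\#\to A^\#$ is equivalent to statement (v), closing that case. When $\dim\nn(A)$ is finite, I would invoke \cite[Theorem 3]{rs} (which is the ingredient that underlies Proposition \ref{pro540}, but now applied directly in the form involving $\dim\nn(A_n)=\dim\nn(A)$) to conclude that $A_n^\#\to A^\#$ is equivalent to statement (iv).

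I do not foresee a real obstacle in this proof: it is a matter of assembling results already established, in particular Theorem \ref{thm320}, Corollary \ref{cor560}, Proposition \ref{pro540} and \cite[Theorem 3]{rs}. The only point requiring mild attention is verifying that all the preceding results are applicable in our exact setting (group invertibility of each $A_n$ is immediate from Theorem \ref{theo310}, and the finiteness hypothesis on $A$ transfers to the approximating sequence only after one knows the relevant equality of dimensions, which is precisely what (iv) and (v) assert). No new computation is needed.
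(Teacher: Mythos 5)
Your proposal is correct and takes essentially the same route as the paper, whose proof is likewise a direct assembly of Theorem~\ref{thm320}, Corollary~\ref{cor560}, Proposition~\ref{pro540} and \cite[Theorem 3]{rs}. The only detail the paper adds is that the degenerate case $A=0$ is dispatched separately via Remark~\ref{rem3950} and Proposition~\ref{prop3960}; your outline omits this, but it is a minor point that does not affect the argument.
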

\begin{proof} Apply Theorem \ref{thm320}, Corollary \ref{cor560}, \cite[Theorem 3]{rs} and  Proposition \ref{pro540}.
For the case $A=0$, apply Remark \ref{rem3950} and Proposition \ref{prop3960}.
\end{proof}

\indent Now the finite dimensional case will be derived from Theorem \ref{thm570}. It is worth noticing that the following corollary
also provides a different proof of a well known result concerning the continuity of the Moore-Penrose inverse in the matricial setting, see \cite[Theorem 5.2]{S}.

\begin{corollary}\label{cor580}
Let $A \in \ce_m$ be a (dual) core invertible matrix. Suppose that exists a sequence $\suc{A_n} \subset \ce_m$
of (dual) core invertible matrices such that $\suc{A_n}$ converges to $A$. The following statements are equivalent.
\begin{enumerate}[{\rm (i)}]
\item The sequence $\suc{\core{A}_n}$ converges to $\core{A}$.
\item The sequence $\suc{\dcore{A_n}}$ converges to $\dcore{A}$.
\item The sequence $\suc{A_n^\dag}$ converges to $A^\dag$.
\item There exists $n_0 \in \ene$ such that $\rk(A_n) = \rk(A)$, for $n \geq n_0$.
\end{enumerate}
\end{corollary}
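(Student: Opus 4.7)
The plan is to derive this corollary directly from Theorem \ref{thm570} by specializing to the finite-dimensional Hilbert space $\hh = \ce^m$, on which $\ll(\hh) = \ce_m$.

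First I would observe that every matrix in $\ce_m$ acts on the finite-dimensional Hilbert space $\ce^m$, so the ambient $C^*$-algebra is $\ll(\ce^m)$. In this setting both $\dim\nn(A)$ and $\cod \rr(A)$ are automatically finite, so the hypotheses of Theorem \ref{thm570} are trivially met; likewise every $A_n$ is a Hilbert space operator, and by assumption each $A_n$ is (dual) core invertible. Therefore the equivalence $\mathrm{(i)} \Leftrightarrow \mathrm{(ii)} \Leftrightarrow \mathrm{(iii)}$ is nothing more than the corresponding equivalence in Theorem \ref{thm570}.

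Next I would handle the link with statement (iv). The rank-nullity identity
\begin{equation*}
\rk(A_n) + \dim \nn(A_n) = m = \rk(A) + \dim \nn(A)
\end{equation*}
yields that the condition $\rk(A_n) = \rk(A)$ for $n$ large is equivalent to $\dim \nn(A_n) = \dim \nn(A)$ for $n$ large, which is statement (iv) of Theorem \ref{thm570}. Equally, using $\cod \rr(A_n) = m - \rk(A_n)$, it is equivalent to $\cod \rr(A_n) = \cod \rr(A)$ for $n$ large, which is statement (v) of that theorem. Either form transfers the equivalence into our setting.

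Finally, the case $A=0$ deserves a separate sanity check: if $A=0$, then $\rk(A)=0$ and the rank condition in (iv) forces $A_n = 0$ for $n \ge n_0$, in which case all three generalized inverses $\core{A}_n$, $\dcore{A_n}$, and $A_n^\dag$ are zero eventually and trivially converge to the corresponding inverses of $A$; conversely, if any of (i)–(iii) holds while infinitely many $A_n$ are nonzero, then by Remark \ref{rem3950} and Proposition \ref{prop3960} the relevant inverse sequences would be unbounded, contradicting convergence. There is no real obstacle here; the work was already done in Theorem \ref{thm570}, and the only item to verify is the translation between $\dim \nn(A_n)$ (or $\cod \rr(A_n)$) and $\rk(A_n)$ provided by the rank-nullity theorem.
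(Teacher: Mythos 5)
Your proposal is correct and follows essentially the same route as the paper, whose proof is simply an application of Theorem \ref{thm570}; your explicit rank--nullity translation between $\rk(A_n)=\rk(A)$ and the conditions $\dim\nn(A_n)=\dim\nn(A)$ or $\cod\rr(A_n)=\cod\rr(A)$ makes the reduction transparent. The separate check for $A=0$ is also consistent with how the paper handles that case inside Theorem \ref{thm570} via Remark \ref{rem3950} and Proposition \ref{prop3960}.
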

\begin{proof} Apply Theorem \ref{thm570}.
\end{proof}

\section{Differentiability of the (dual) core inverse}

\noindent To prove the main results of this section, some preparation is needed.

Let $U\subseteq \ere$ be an open set and consider ${\bold a}\colon U\to\aa$ a function such that ${\bold a}(U)\subseteq\core{\aa}$. Since according to Theorem \ref{theo310},  $\core{\aa}=\dcore{\aa}=\aa^\#\subset\aa^\dag$,
it is possible to consider the functions  
$$
\core{{\bold a}},\, \dcore{{\bold a}},\, {\bold a}^\#,\, {\bold a}^\dag\colon U\to\aa,
$$ 
\noindent which are defined as follows. Given $u\in U$,
\begin{align*}
& &&\core{{\bold a}}(u)=\core{({\bold a}(u))},&&\dcore{{\bold a}}(u)=\dcore{({\bold a}(u))},&\\
& &&{\bold a}^\#(u)=({\bold a}(u))^\#,&&{\bold a}^\dag (u)= ({\bold a}(u))^\dag.&\\
\end{align*}

Since in this section functions instead of sequence will be considered and the notion of continuity will be central in the results concerning differentiability,
Theorem \ref{thm320} will be reformulated for functions.

\begin{theorem}\label{thm530} Let $\aa$ be a unital $C^*$-algebra and consider $U\subseteq \ere$ an open set and a function ${\bold a}\colon U\to\aa$ such that 
${\bold a}(U)\subseteq\core{\aa}$ and ${\bold a}$ is continuous at $t_0\in U$.
The following statements are equivalent.
\begin{enumerate}[{\rm (i)}]
\item  The element ${\bold a}(t_0)\in\core{\aa}$ and the function $\core{{\bold a}}$ is continuous at $t_0$.
\item  The element ${\bold a}(t_0)\in\dcore{\aa}$ and the function $\dcore{{\bold a}}$ is continuous at $t_0$.
\item  The element ${\bold a}(t_0)\in\aa^\#$ and the function ${\bold a}^\#$ is continuous at $t_0$.
\item   The element ${\bold a}(t_0)\in\core{\aa}$ and there exists an open set $V\subseteq U$ such that $t_0\in V$ and the function $\core{{\bold a}}$ is  bounded on $V$.
\item    The element ${\bold a}(t_0)\in\dcore{\aa}$ and there exists an open set $W\subseteq U$ such that $t_0\in W$ and the function $\dcore{{\bold a}}$ is  bounded on $W$.
\item  The element ${\bold a}(t_0)\in\aa^\dag$,  the function ${\bold a}^\dag$ is continuous at $t_0$, and there exists and open set $I\subseteq U$ such that $t_0\in I$
and  the function $\core{{\bold a}}{\bold a}$ is bounded on $I$.
\item The element ${\bold a}(t_0)\in\aa^\dag$,  the function ${\bold a}^\dag$ is continuous at $t_0$ , and there exists and open set $J\subseteq U$ such that $t_0\in J$
and the function ${\bold a}\core{{\bold a}}$ is bounded on $J$.
\item  The element ${\bold a}(t_0)\in\aa^\dag$,  the function ${\bold a}^\dag$ is continuous at $t_0$, and there exist an open set $Z$ such that $t_0\in Z$and $\psi\in[0, \frac{\pi}{2})$ such that 
when ${\bold a}(t)\neq 0$ ($t\in Z$), $\psi_t=\psi_{{\bold a}(t)}\le \psi$.
\end{enumerate}
\end{theorem}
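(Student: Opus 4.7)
The plan is to reduce Theorem \ref{thm530} to its sequential counterpart, Theorem \ref{thm320}, via the standard sequential characterizations of continuity and local boundedness. Recall that for any map $f\colon U\to\aa$, (a) $f$ is continuous at $t_0$ if and only if $f(t_n)\to f(t_0)$ for every sequence $\suc{t_n}\subseteq U$ with $t_n\to t_0$, and (b) $f$ is bounded on some open neighbourhood of $t_0$ if and only if $\suc{f(t_n)}$ is bounded for every sequence $\suc{t_n}$ converging to $t_0$. Direction ``$\Rightarrow$'' of (b) is immediate; for ``$\Leftarrow$'', if no such neighbourhood exists, one extracts $t_n\in(t_0-1/n,t_0+1/n)\cap U$ with $\|f(t_n)\|>n$.

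The core of the argument is then mechanical. Fix any sequence $\suc{t_n}\subseteq U$ with $t_n\to t_0$, and set $a=\mathbf{a}(t_0)$ and $a_n=\mathbf{a}(t_n)$. Continuity of $\mathbf{a}$ at $t_0$ forces $a_n\to a$, and by hypothesis $\suc{a_n}\subseteq\core{\aa}=\dcore{\aa}=\aa^\#\cap\aa^\dag$, so Theorem \ref{thm320} gives the equivalences of the eight sequential statements about $\suc{a_n}$. Using (a) above on the statements that involve continuity of $\core{\mathbf{a}}$, $\dcore{\mathbf{a}}$, $\mathbf{a}^\#$ or $\mathbf{a}^\dag$, and (b) on the statements involving local boundedness of $\core{\mathbf{a}}$, $\dcore{\mathbf{a}}$, $\core{\mathbf{a}}\mathbf{a}$ or $\mathbf{a}\core{\mathbf{a}}$, each assertion (i)--(viii) of Theorem \ref{thm530} translates exactly into its ``for every sequence $t_n\to t_0$'' counterpart of the corresponding assertion in Theorem \ref{thm320}. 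Since every sequence is admissible, the functional equivalences drop out of the sequential ones.

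The only genuinely delicate point concerns statement (viii). The condition there, ``there exist an open set $Z\ni t_0$ and $\psi\in[0,\pi/2)$ such that $\psi_t\le\psi$ whenever $\mathbf{a}(t)\neq0$'', must be read as local boundedness of the scalar function $t\mapsto\psi_t$ away from $\pi/2$, with the convention $\psi_t=0$ when $\mathbf{a}(t)=0$ (as used throughout the proof of Theorem \ref{thm320}). Its sequential form, ``for every $t_n\to t_0$ there exist $N$ and $\psi<\pi/2$ with $\psi_{t_n}\le\psi$ for $n\ge N$'', is exactly what is needed to invoke Theorem \ref{thm320}(viii) on the tail of $\suc{a_n}$; the failure of this form would, by the same extraction argument as in (b), produce a sequence $t_{n_k}\to t_0$ with $\psi_{t_{n_k}}\to\pi/2$, contradicting the neighbourhood formulation. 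The rest is bookkeeping, and no further ingredient beyond Theorem \ref{thm320} is required, so I expect this to be the main (but still routine) obstacle in writing the proof cleanly.
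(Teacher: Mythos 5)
Your proposal is correct and follows exactly the route the paper takes: the published proof of this theorem is literally the single line ``Apply Theorem \ref{thm320}'', i.e.\ the same reduction of the functional statements to their sequential counterparts via the standard sequence characterizations of continuity and local boundedness. Your elaboration of the translation (including the extraction argument for local boundedness and the reading of statement (viii) with $\psi_t=0$ when ${\bold a}(t)=0$) just makes explicit what the paper leaves to the reader.
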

\begin{proof} Apply Theorem \ref{thm320}.
\end{proof}

\begin{remark}\label{rem591}\rm  Note that under the same hypotheses of Theorem \ref{thm530}, when ${\bold a}(t_0)=0$, the continuity of the  function $\core{{\bold a}}$
(respectively $\dcore{{\bold a}}$, ${\bold a}^\#$,  ${\bold a}^\dag$) at $t_0$ is equivalent to the following condition: there exists an open set $K\subseteq U$, $t_0\in K$
and ${\bold a}(t)=0$, for all $t\in K$ (Remark \ref{rem3950}, Proposition \ref{prop3960}).
\end{remark}
\indent To study the differentiability of the (dual) core inverse, the differentiability of the Moore-Penrose inverse need to be  considered first.

\begin{remark}\label{rem540} \rm Let $\aa$ be a unital $C^*$-algebra and consider an open set $U$ and  ${\bold a} \colon U\to \aa$ a function such that
 ${\bold a}(U)\subset \aa^\dag$ and there is $t_0$ such that ${\bold a}$ is differentiable at $t_0$. Thus, a necessary and sufficient condition for ${\bold a}^\dag$ to be 
differentiable at $t_0$ is that ${\bold a}^\dag$ is continuous at $t_0$. In fact, if ${\bold a}(t_0)\neq 0$, there is an open set $V\subseteq U$ such that $t_0\in V$ and ${\bold a}(t)\neq 0$ for $t\in V$, and then according to 
\cite[Theorem 2.1]{kol}, this equivalence holds. On the other hand, if   ${\bold a}(t_0)=0$, according to Remark \ref{rem3950} (vi)-(vii), the function ${\bold a}^\dag$ is continuous at $t_0$ if and only if there exists an open set $W$ such that $t_0\in  W$ and ${\bold a}(t)=0$ for $t\in W$, which implies that ${\bold a}^\dag$ is differentiable at $t_0$. As a result, in \cite[Theorem 2.1]{kol} it is not necessary to assume that ${\bold a}(t)\neq 0$ for $t$ in a neighbourhood of $t_0$.
\end{remark}
\indent In the following theorem the differentiability of the (dual) core inverse will be studied. Note that the following notation will be used.
Given a unital $C^*$-algebra $\aa$, if $U\subseteq \ere$ is an open set and ${\bold b}\colon U\to\aa$ is a function, then ${\bold b}^*\colon U\to\aa$
will denote the function ${\bold b}^*(t)=({\bold b}(t))^*$ ($t\in U$). In addition, if ${\bold b}\colon U\to\aa$ is differentiable at $t_0\in U$, 
then ${\bold b}'(t_0)$ will stand for the derivative of ${\bold b}$ at $t_0$.

\begin{theorem}\label{thm510} Let $\aa$ be a unital $C^*$-algebra and consider $U\subseteq \ere$ an open set and ${\bold a}\colon U\to\aa$ a function such that  is
differentiable at $t_0\in U$ and ${\bold a}(U)\subset \core{\aa}=\dcore{\aa}=\aa^\#$. The following statements are equivalent.
\begin{enumerate}[{\rm (i)}]
\item The function $\core{{\bold a}}$ is continuous at $t_0$.
\item The function $\core{{\bold a}}$ is differentiable at $t_0$.
\item The function $\dcore{{\bold a}}$ is differentiable at $t_0$.
\item The function ${\bold a}^\#$ is differentiable at $t_0$.

\hspace*{\dimexpr\linewidth-\textwidth\relax}{\noindent Furthermore, the following formulas hold.}
\item
\begin{align*}
(\core{{\bold a}})'(t_0)&=\core{{\bold a}}(t_0){\bold a}(t_0)({\bold a}^\dag)'(t_0)(\uno-{\bold a}(t_0)\core{{\bold a}}(t_0))- \core{{\bold a}}(t_0){\bold a}'(t_0)\core{{\bold a}}(t_0)\\
&+(\uno-\core{{\bold a}}(t_0){\bold a}(t_0)){\bold a}'(t_0){\bold a}^\dag(t_0)\core{{\bold a}}(t_0).\\
\end{align*}
\item 
\begin{align*}
(\dcore{{\bold a}})'(t_0)&=(\uno-\dcore{{\bold a}}(t_0){\bold a}(t_0))({\bold a}^\dag)'(t_0){\bold a}(t_0)\dcore{{\bold a}}(t_0)   - \dcore{{\bold a}}(t_0){\bold a}'(t_0)\dcore{{\bold a}}(t_0)\\
&+\dcore{{\bold a}}(t_0){\bold a}^\dag(t_0){\bold a}'(t_0)(\uno-{\bold a}(t_0)\dcore{{\bold a}}(t_0)).\\
\end{align*}
\item 
\begin{align*}
({\bold a}^\#)'(t_0)&=2\core{{\bold a}}(t_0)(\core{{\bold a}})'(t_0){\bold a}(t_0)+ (\core{{\bold a}}(t_0))^2{\bold a}'(t_0)\\
&={\bold a}'(t_0)(\dcore{{\bold a}}(t_0))^2 +2{\bold a}(t_0)\dcore{{\bold a}}(t_0)(\dcore{{\bold a}})'(t_0)\\
&= (\core{{\bold a}})'(t_0){\bold a}(t_0)\dcore{{\bold a}}(t_0)+\core{{\bold a}}(t_0){\bold a}'(t_0)\dcore{{\bold a}}(t_0)+\core{{\bold a}}(t_0){\bold a}(t_0)(\dcore{{\bold a}})'(t_0).
\end{align*}
\end{enumerate}
\end{theorem}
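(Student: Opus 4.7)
The plan is to first dispose of the trivial (differentiable implies continuous) implications and then to prove, assuming (i), the differentiability of $\core{\mathbf{a}}$, $\dcore{\mathbf{a}}$ and $\mathbf{a}^\#$ at $t_0$, together with the explicit formulas (v), (vi) and (vii).

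Since differentiability at a point forces continuity at that point, (ii), (iii) and (iv) imply continuity at $t_0$ of $\core{\mathbf{a}}$, $\dcore{\mathbf{a}}$ and $\mathbf{a}^\#$ respectively; but by Theorem \ref{thm530} these three continuity properties are mutually equivalent, so (i) follows in each case. This settles (ii)$\Rightarrow$(i), (iii)$\Rightarrow$(i) and (iv)$\Rightarrow$(i).

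For the substantive direction, assuming (i) I would first invoke Theorem \ref{thm530} to obtain the continuity of $\mathbf{a}^\dag$ at $t_0$. Combined with the differentiability of $\mathbf{a}$ at $t_0$, Remark \ref{rem540} then upgrades this to the differentiability of $\mathbf{a}^\dag$ at $t_0$. When $\mathbf{a}(t_0)=0$, Remark \ref{rem591} shows that hypothesis (i) forces $\mathbf{a}$ to vanish on a neighbourhood of $t_0$, so $\mathbf{a}'(t_0)=0$ and every assertion holds trivially with all derivatives equal to zero; I may therefore focus on the main case $\mathbf{a}(t_0)\neq 0$. In that case, I apply Lemma \ref{lema4}(i) to the pair $a=\mathbf{a}(t_0)$, $b=\mathbf{a}(t)$, divide both sides by $t-t_0$, and let $t\to t_0$. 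Using the continuity of $\core{\mathbf{a}}$ (hypothesis (i)), of $\mathbf{a}$ (from differentiability), and the existence of $\mathbf{a}'(t_0)$ and $(\mathbf{a}^\dag)'(t_0)$, each of the three terms on the right-hand side of Lemma \ref{lema4}(i) has a limit, and these limits reproduce exactly the right-hand side of (v). This proves (i)$\Rightarrow$(ii) together with the formula (v). The same method applied to Lemma \ref{lema4}(ii) yields (i)$\Rightarrow$(iii) together with (vi).

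For (iv), I would use the algebraic identities $\mathbf{a}^\#=(\core{\mathbf{a}})^2\mathbf{a}=\mathbf{a}(\dcore{\mathbf{a}})^2=\core{\mathbf{a}}\mathbf{a}\dcore{\mathbf{a}}$ collected in Section~2 (cf.~\cite[Theorem~2.19]{rdd}) and apply the Leibniz rule in each of them, inserting the derivatives of $\core{\mathbf{a}}$ and $\dcore{\mathbf{a}}$ already obtained; this yields directly the three expressions for $(\mathbf{a}^\#)'(t_0)$ listed in (vii). I anticipate that the main technical wrinkle is to justify the coefficient $2$ appearing in the first expression of (vii) (and, symmetrically, in the second), which requires collapsing the two inner terms produced by the product rule on $(\core{\mathbf{a}})^2\mathbf{a}$ via an identity of the form $(\core{\mathbf{a}})'(t_0)\core{\mathbf{a}}(t_0)\mathbf{a}(t_0)=\core{\mathbf{a}}(t_0)(\core{\mathbf{a}})'(t_0)\mathbf{a}(t_0)$. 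I plan to obtain this by differentiating the outer inverse relation $\core{\mathbf{a}}\mathbf{a}\core{\mathbf{a}}=\core{\mathbf{a}}$ at $t_0$ and combining it with the identities $\core{a}a=a^\# a$ and $a^\#a\cdot a=a$. Apart from this piece of identity juggling, the proof reduces to a routine limit passage made possible by Lemma \ref{lema4} and Remark \ref{rem540}.
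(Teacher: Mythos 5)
Your treatment of the equivalences (i)--(iv) and of formulas (v) and (vi) coincides with the paper's proof: the implications (ii)$\Rightarrow$(i), (iii)$\Rightarrow$(i), (iv)$\Rightarrow$(i) come from ``differentiable implies continuous'' plus Theorem \ref{thm530}, and the substantive direction is obtained, exactly as in the paper, by dividing the decomposition of Lemma \ref{lema4} by $t-t_0$ and letting $t\to t_0$, after upgrading the continuity of ${\bold a}^\dag$ (Theorem \ref{thm530}) to its differentiability (Remark \ref{rem540}). The only cosmetic difference is that for the dual core inverse you apply Lemma \ref{lema4} (ii) directly, while the paper passes through ${\bold a}^*$ and the identity $\dcore{x}=(\core{(x^*)})^*$; these are interchangeable. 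Your separate discussion of the case ${\bold a}(t_0)=0$ is harmless but not needed, since Lemma \ref{lema4} and the limit argument apply verbatim there.

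The gap is precisely at the ``technical wrinkle'' you flagged in (vii). The identity
$(\core{{\bold a}})'(t_0)\core{{\bold a}}(t_0){\bold a}(t_0)=\core{{\bold a}}(t_0)(\core{{\bold a}})'(t_0){\bold a}(t_0)$
that you need in order to collapse the Leibniz expansion of $(\core{{\bold a}})^2{\bold a}$ into the first line of (vii) is false in general, and differentiating $\core{{\bold a}}{\bold a}\core{{\bold a}}=\core{{\bold a}}$ will not produce it. Take ${\bold a}(t)=A+tB$ with $A\in\ce_2$ invertible and $AB\neq BA$; near $t=0$ one has $\core{{\bold a}}(t)={\bold a}(t)^{-1}$, hypothesis (i) holds, and $(\core{{\bold a}})'(0)=-A^{-1}BA^{-1}$, so the two sides of the proposed identity are $-A^{-1}BA^{-1}$ and $-A^{-2}B$, which differ. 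The same example shows that the first two displayed expressions in (vii) are incorrect as stated: the first evaluates to $-A^{-2}B$, whereas $({\bold a}^\#)'(0)=(A^{-1}+O(t))'=-A^{-1}BA^{-1}$. What the product rule actually yields is the unabridged expansion
$({\bold a}^\#)'(t_0)=(\core{{\bold a}})'(t_0)\core{{\bold a}}(t_0){\bold a}(t_0)+\core{{\bold a}}(t_0)(\core{{\bold a}})'(t_0){\bold a}(t_0)+(\core{{\bold a}}(t_0))^2{\bold a}'(t_0)$,
and similarly for ${\bold a}(\dcore{{\bold a}})^2$; only the third expression in (vii) is correct as written. So you should not attempt to prove the coefficient $2$. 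For what it is worth, the paper's own proof of (vii) consists of the single assertion that the formulas ``can be derived'' from ${\bold a}^\#=(\core{{\bold a}})^2{\bold a}={\bold a}(\dcore{{\bold a}})^2=\core{{\bold a}}{\bold a}\dcore{{\bold a}}$, and an honest derivation does not give the stated first two lines. The equivalence of (i)--(iv) is unaffected: statement (iv) itself does follow from your Leibniz argument.
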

\begin{proof} According to Lemma \ref{lema4}, 
\begin{align*}
\core{{\bold a}}(t)-\core{{\bold a}}(t_0)&= \core{{\bold a}}(t){\bold a}(t)({\bold a}^\dag(t)-{\bold a}^\dag (t_0))(\uno-{\bold a}(t_0)\core{{\bold a}}(t_0))\\ 
&+\core{{\bold a}}(t)({\bold a}(t_0)-{\bold a}(t))\core{{\bold a}}(t_0)
+ (\uno-\core{{\bold a}}(t){\bold a}(t))({\bold a}(t)-{\bold a}(t_0)){\bold a}^\dag(t_0)\core{{\bold a}}(t_0).\\
\end{align*}

Now suppose that statement (i) holds. According to Theorem \ref{thm530}, the function  ${\bold a}^\dag$ is continuous at $t_0$, and according to 
\cite[Theorem 2.1]{kol} and Remark \ref{rem540}, the function ${\bold a}^\dag$ is differentiable at $t_0$. Thus,
$$\frac{\core{{\bold a}}(t){\bold a}(t)({\bold a}^\dag (t)-{\bold a}^\dag (t_0))(\uno-{\bold a}(t_0)\core{{\bold a}}(t_0))}{t-t_0}$$ 
\noindent converges to $\core{{\bold a}}(t_0){\bold a}(t_0)({\bold a}^\dag)'(t_0)(\uno-{\bold a}(t_0)\core{{\bold a}}(t_0))$. In addition,
$$
\frac{\core{{\bold a}}(t)({\bold a}(t_0)-{\bold a}(t))\core{{\bold a}}(t_0)}{t-t_0}
$$
\noindent converges to  $- \core{{\bold a}}(t_0){\bold a}'(t_0)\core{{\bold a}}(t_0)$, and 
$$
\frac{(\uno-\core{{\bold a}}(t){\bold a}(t))({\bold a}(t)-{\bold a}(t_0)){\bold a}^\dag(t_0)\core{{\bold a}}(t_0)}{t-t_0}
$$
\noindent converges to $(\uno-\core{{\bold a}}(t_0){\bold a}(t_0)){\bold a}'(t_0){\bold a}^\dag(t_0)\core{{\bold a}}(t_0)$. Consequently statements (ii) and (v) hold. 

It is evident that statement (ii) implies statement (i). 

Now observe that the function ${\bold a}^*\colon U\to\aa$ is differentiable at $t_0$ and ${\bold a}^*(U)\subset \core{\aa}$ (Theorem \ref{theo310}). 

Suppose that statement (i) holds. According to the identity $\core{({\bold a}^*)}(t)=(\dcore{{\bold a}})^*(t)$ and Theorem \ref{thm530},
the function $\core{({\bold a}^*)}\colon U\to \aa$ is continuous at $t_0$.
Thus, according to what has been proved, the function  $\core{({\bold a}^*)}\colon U\to \aa$ is 
differentiable at $t_0$. Therefore, the function $\dcore{{\bold a}}\colon U\to \dcore{\aa}$ is differentiable at $t_0$. Consequently, statement (iii) holds.
Furthermore, since  $(\dcore{{\bold a}})'(t_0)=((\core{{(\bold a})^*})')^*(t_0)$, to prove statement (vi), apply statement (v).

On the other hand, if statement (iii) holds, then the function $\dcore{{\bold a}}$ is continuous at $t_0$. According to Theorem \ref{thm530}, statement (i) holds.

Suppose that statement (i) holds. According to \cite[Theorem 2.19]{rdd} and Lemma \ref{lema2} (iv), the following identities hold.
$$
{\bold a}^\#=(\core{{\bold a}})^2{\bold a}={\bold a}(\dcore{{\bold a}})^2=\core{{\bold a}}{\bold a}\dcore{{\bold a}}.
$$
Therefore, according to what has been proved, the function ${\bold a}^\#$ is differentiable at $t_0$. Furthermore, from these idenetities statement (vii) can be derived.

On the other hand, according to Theorem \ref{thm530}, statement (iv) implies statement (i).
\end{proof}

\begin{remark}\label{rem570}\rm Under the same hypothesis of Theorem \ref{thm510}, the following facts should be noted.\par
\noindent (i). When $a(t_0)=0$, according to Remark \ref{rem591}, 
$$
(\core{{\bold a}})'(t_0)=(\dcore{{\bold a}})'(t_0)=({\bold a}^\#)'(t_0)=({\bold a}^\dag)' (t_0)=0.
$$
\noindent (ii). Recall that in \cite[Theorem 2.1]{kol},
a formula concerning the derivative of the function ${\bold a}^\dag$ at $t_0$ was given.\par
\noindent (iii). Note that according to Theorem \ref{thm530}, a necessary and sufficient condition for the function $\dcore{{\bold a}}$
(respectively ${\bold a}^\#$) to be differentiable at $t_0$ is that  $\dcore{{\bold a}}$ (respectively ${\bold a}^\#$) is continous at $t_0$.
In fact, the continuity of one of the functions $\core{{\bold a}}$, $\dcore{{\bold a}}$ and ${\bold a}^\#$ at a point $t_0$ is equivalent to the
continuity and the differentiability of the three functions under consideration at $t_0$ (Theorem \ref{thm530} and Theorem \ref{thm510}).\par
\noindent (iv). According to \cite[Theorem 2.19]{rdd}, 
\begin{align*}
& &&{\bold a}^\dag=\dcore{{\bold a}}{\bold a}\core{{\bold a}},& &\core{{\bold a}}= {\bold a}^\#{\bold a}{\bold a}^\dag,&
&\dcore{{\bold a}}={\bold a}^\dag {\bold a}{\bold a}^\#.&\\
\end{align*}
\noindent Thus, the derivative of ${\bold a}^\dag$, $\core{{\bold a}}$ and $\dcore{{\bold a}}$
at $t_0$ can also be computed as follows:
\begin{align*}
({\bold a}^\dag)'(t_0)&= (\dcore{{\bold a}})'(t_0){\bold a}(t_0)\core{{\bold a}}(t_0)+ \dcore{{\bold a}}(t_0){\bold a}'(t_0)\core{{\bold a}}(t_0) +\dcore{{\bold a}}(t_0){\bold a}(t_0)(\core{{\bold a}})'(t_0).\\
(\core{{\bold a}})'(t_0)&=({\bold a}^\#)'(t_0){\bold a}(t_0){\bold a}^\dag(t_0)+ {\bold a}^\#(t_0){\bold a}'(t_0){\bold a}^\dag(t_0)+{\bold a}^\#(t_0){\bold a}(t_0)({\bold a}^\dag)'(t_0).\\
(\dcore{{\bold a}})'(t_0)&=({\bold a}^\dag)'(t_0){\bold a}(t_0){\bold a}^\#(t_0)+{\bold a}^\dag(t_0){\bold a}'(t_0){\bold a}^\#(t_0)+ {\bold a}^\dag(t_0){\bold a}(t_0) ({\bold a}^\#)'(t_0).\\
\end{align*}
\end{remark}

\vskip.3truecm
\noindent Julio Ben\'{\i}tez\par
\noindent E-mail address: jbenitez@mat.upv.es
\vskip.3truecm
\noindent Enrico Boasso\par
\noindent E-mail address: enrico\_odisseo@yahoo.it \par
\vskip.3truecm
\noindent Sanzhang Xu\par
\noindent xusanzhang5222@126.com 
\end{document}